\documentclass[10pt]{article}

\usepackage{amssymb}
\usepackage{amsmath}
\usepackage{algpseudocode}
\textwidth=6.5true in
\textheight=9true in
\topmargin-0.5true in
\oddsidemargin=-0.25true in
\usepackage{graphics,epsfig}
\usepackage{url,here}
\usepackage[backref,colorlinks=true]{hyperref}
\usepackage{multirow}
\usepackage{algorithm}
\usepackage{placeins}
\DeclareMathOperator*{\argmin}{arg\,min}
\allowdisplaybreaks

\begin{document}
\newenvironment {proof}{{\noindent\bf Proof.}}{\hfill $\Box$ \medskip}

\newtheorem{theorem}{Theorem}[section]
\newtheorem{lemma}[theorem]{Lemma}
\newtheorem{condition}[theorem]{Condition}
\newtheorem{proposition}[theorem]{Proposition}
\newtheorem{remark}[theorem]{Remark}
\newtheorem{definition}[theorem]{Definition}
\newtheorem{hypothesis}[theorem]{Hypothesis}
\newtheorem{corollary}[theorem]{Corollary}
\newtheorem{example}[theorem]{Example}
\newtheorem{descript}[theorem]{Description}
\newtheorem{assumption}[theorem]{Assumption}

\newcommand{\ba}{\begin{align}}
\newcommand{\ea}{\end{align}}

\def\P{\mathbb{P}}
\def\R{\mathbb{R}}
\def\E{\mathbb{E}}
\def\N{\mathbb{N}}
\def\Z{\mathbb{Z}}

\renewcommand {\theequation}{\arabic{section}.\arabic{equation}}
\def \non{{\nonumber}}
\def \hat{\widehat}
\def \tilde{\widetilde}
\def \bar{\overline}

\def\ind{{\mathchoice {\rm 1\mskip-4mu l} {\rm 1\mskip-4mu l}
{\rm 1\mskip-4.5mu l} {\rm 1\mskip-5mu l}}}

\title{\Large\ {\bf An efficient and unbiased method for sensitivity analysis of stochastic reaction networks}}

\author{Ankit Gupta and Mustafa Khammash\\
Department of Biosystems Science and Engineering \\ ETH Zurich \\  Mattenstrasse 26 \\ 4058 Basel, Switzerland. 
}
\date{}
\maketitle
\begin{abstract}
We consider the problem of estimating parameter sensitivity for Markovian models of reaction networks. Sensitivity values measure the responsiveness of an output with respect to the model parameters. They help in analyzing the network, understanding its robustness properties and identifying the important reactions for a specific output. Sensitivity values are commonly estimated using methods that perform finite-difference computations along with Monte Carlo simulations of the reaction dynamics. These methods are computationally efficient and easy to implement, but they produce a \emph{biased} estimate which can be unreliable for certain applications. Moreover the size of the bias is generally unknown and hence the accuracy of these methods cannot be easily determined.  There also exist unbiased schemes for sensitivity estimation but these schemes can be computationally infeasible, even for very simple networks. Our goal in this paper is to present a new method for sensitivity estimation, which combines the computational efficiency of finite-difference methods with the accuracy of unbiased schemes.  Our method is easy to implement and it relies on an exact representation of parameter sensitivity that we recently proved in an earlier paper. Through examples we demonstrate that the proposed method can outperform the existing methods, both biased and unbiased, in many situations.
\end{abstract}

\noindent Keywords: parameter sensitivity; reaction networks; Markov process; finite-difference schemes; random time-change representation; common reaction path (CRP); coupled finite-difference (CFD); Girsanov method.\\
 
\noindent Mathematical Subject Classification (2010):  60J22; 60J27; 60H35; 65C05.
\medskip

\setcounter{equation}{0}

\section{Introduction}

Reaction networks are frequently encountered in several scientific disciplines, such as, Chemistry \cite{BookCR}, Systems Biology \cite{CellSignaling,MO}, Epidemiology \cite{Hethcote}, Ecology \cite{Bascompte} and Pharmacology \cite{Berger}.
It is well-known that when the population sizes of the reacting species are small, then a deterministic formulation of the reaction dynamics is inadequate in understanding many important properties of the system (see \cite{Elowitz,ArkinAdams}). Hence stochastic models are necessary and the most prominent class of such models is where the reaction dynamics is described by a continuous time Markov chain. Our paper is concerned with sensitivity analysis of these Markovian models of reaction networks.

Generally, reaction networks involve many kinetic parameters that influence their dynamics. With sensitivity analysis one can measure the receptiveness of an outcome with respect to small changes in parameter values. These sensitivity values give insights about network design and its robustness properties \cite{Stelling}. They also help in identifying the important reactions for a given output, estimating the model parameters and in fine-tuning the system's behaviour \cite{Feng}. The existing literature contains many methods for sensitivity analysis of stochastic reaction networks, but all these methods have certain drawbacks. They either introduce a \emph{bias} in the sensitivity estimate \cite{IRN,KSR1,KSR2,DA} or the associated estimator can have large variances \cite{Gir} or the method becomes impractical for large networks \cite{Our}. Due to these reasons, the search for better methods for sensitivity analysis of stochastic reaction networks is still an active research problem.

We now formally describe a stochastic reaction network consisting of $d$ species ${\bf S}_1,\dots,{\bf S}_d$. Under the well-stirred assumption \cite{GP}, the state of the system at any time is given by a vector in $\N^d_0$, where $\N_0$ is the set of non-negative integers. When the state is $x=(x_1,\dots,x_d)$ then the population size corresponding to species ${\bf S}_i$ is $x_i$. The state evolves as the species interact through $K$ reaction channels. If the state is $x$, then the $k$-th reaction fires at rate $\lambda_k(x)$ and it displaces the state by the \emph{stoichiometric vector} $\zeta_k \in \Z^d$. Here $\lambda_k$ is called the \emph{propensity} function for the $k$-th reaction. We can represent the reaction dynamics by a continuous time Markov chain and the distribution of this process evolves according to the chemical master equation \cite{GP} which is quite difficult to solve except in some restrictive cases. Fortunately, the sample paths of this process can be easily simulated using Monte Carlo procedures such as Gilespie's \emph{stochastic simulation algorithm} (SSA) and its variants \cite{GP,NR}.

Suppose that the propensity functions of the reaction network depend on a scalar parameter $\theta$. Hence when the state is $x$, the $k$-th reaction fires at rate $\lambda_k(x,\theta)$. Let $(X_\theta(t))_{t \geq 0}$ denote the Markov process representing the reaction dynamics. For a function $f: \N^d_0 \to \R$ and an observation time $T \geq 0$, our output of interest is $f(X_\theta(T))$. We would like to determine how much the expected value of this output changes with infinitesimal changes in the parameter $\theta$. In other words, our aim is to compute
\begin{align}
\label{def:sensitivity}
S_\theta(f,T) = \frac{ \partial  }{ \partial \theta } \E\left( f( X_\theta(T) ) \right).
\end{align}
Since the mapping $\theta \mapsto \E\left( f( X_\theta(T) ) \right)$ is generally unknown, it is difficult to evaluate $S_\theta(f,T) $ directly. Therefore we need to estimate this quantity using simulations of the process $X_\theta$. For this purpose, many methods use a finite-difference scheme such as 
\begin{align}
\label{fd:form}
S_{\theta,h}(f,T) = \frac{1}{h} \E \left(  f( X_{\theta+h}(T) ) - f( X_\theta(T) ) \right)
\end{align}
for a small $h$. These methods reduce the variance of the associated estimator by coupling the processes $X_\theta$ and $X_{\theta+h}$ in an intelligent way. Three such couplings are : Common Reaction Numbers (CRN) (see \cite{KSR1}), Common Reaction Paths (CRP) (see \cite{KSR1}) and Coupled finite-differences (CFD) (see \cite{DA}). The two best performing finite-difference schemes are CRP and CFD and we shall discuss them in greater detail in Section \ref{subsec:fdschemes}. It is immediate that a finite-difference scheme produces a \emph{biased} estimate of the true sensitivity value $S_{\theta}(f,T) $. This problem of bias is compounded by the fact that in most cases, the size and even the sign of the bias is unknown, thereby casting a doubt over the estimated sensitivity values. The magnitude of the bias must be proportional to $h$, but it can still be significant for a small $h$ (see Section \ref{sec:ex}). One can reduce the bias by decreasing $h$, but as $h$ gets smaller, the variance of the finite-difference estimator gets larger, making it necessary to generate an extremely large number of samples to obtain a statistically accurate estimate. In many cases, for small values of $h$, the computational cost of generating a large sample is so high that finite-difference schemes become inefficient in comparison to the unbiased methods. Unfortunately there is no way to select an optimum value of $h$, which is small enough to ensure that the bias is small and simultaneously large enough to ensure that the estimator variance is low. This is the main source of difficulty in using finite-difference schemes and we shall revisit this point in Section \ref{sec:ex}. 

To obtain highly accurate estimates of $S_\theta(f,T)$ we need unbiased methods for sensitivity analysis. The first such method was given by Plyasunov and Arkin \cite{Gir} and it relies on the Girsanov measure transformation. Hence we refer to it as the \emph{Girsanov method} in this paper. In this method the sensitivity value $S_\theta(f,T)$ is expressed as
\begin{align}
\label{sens:exact}
S_{\theta}(f,T) = \E \left( s_\theta(f,T)\right),
\end{align}
where $s_\theta(f,t)$ is a random variable whose realizations can be easily obtained by simulating paths of the process $X_\theta$ in the time interval $[0,T]$. The Girsanov method is easy to implement, and its unbiasedness guarantees convergence to the right value $S_{\theta}(f,T) $ as the sample size tends to infinity. However in many situations, the estimator associated with this method has a very high variance, making it extremely inefficient (see the examples in \cite{KSR1,KSR2,DA}). One such situation that commonly arises is when the sensitive parameter $\theta$ is a reaction rate constant with a small size (see \cite{Our}). To remedy this problem of high estimator variances, we proposed another unbiased scheme in \cite{Our}, which is based on a different sensitivity formula of the form \eqref{sens:exact}. In this formula, the expression for the random variable $s_\theta(f,T)$ is derived using the random time-change representation of Kurtz (see Chapter 7 in \cite{EK}). Unfortunately this expression is such that realizations of the random variable $s_\theta(f,T)$ cannot be easily computed from the paths of the process $X_{\theta}$ as the expression involves several expectations of functions of the underlying Markov process at various times and various initial states. If all these expectations are estimated \emph{serially} using independent paths then the problem becomes computationally intractable. Hence we devised a scheme in \cite{Our}, called the \emph{Auxiliary Path Algorithm}(APA), to estimate all these expectations in \emph{parallel} using a fixed number of \emph{auxiliary} paths. The implementation of APA is quite difficult and the memory requirements are very high because one needs to store the paths and dynamically maintain a large table to estimate the relevant quantities. These reasons make APA impractical for large networks and also for large observation times $T$.  However, in spite of these difficulties, we showed in \cite{Our} that APA can be far more efficient than the Girsanov method, in examples where sensitivity is computed with respect to a small reaction rate constant.

The above discussion suggests that all the existing methods for sensitivity analysis, both biased and unbiased, have certain drawbacks. Motivated by this issue, we develop a new method for sensitivity estimation in this paper which is unbiased, easy to implement, has low memory requirements and is more versatile than the existing unbiased schemes. We use the main result in \cite{Our}, to construct another random variable $\hat{s}_\theta(f,T)$ such that \eqref{sens:exact} holds. We then provide a simple scheme, called the \emph{Poisson Path Algorithm}(PPA), to obtain realizations of $\hat{s}_\theta(f,T)$ and this gives us a method to obtain unbiased estimates of parameter sensitivities for stochastic reaction networks.  Similar to APA, PPA also requires estimation of certain quantities using auxiliary paths, but the number of these quantities can be controlled to be so low, that they can be estimated serially using independent paths. Consequently PPA does not require any storage of paths or the dynamic maintenance of a large table as in APA. Hence the memory requirements are low, the implementation is easier, and PPA works well for large networks and for large observation times $T$. In Section \ref{sec:ex} we consider many examples to compare the performance of PPA with the Girsanov method, CFD and CRP. We find that PPA is usually far more efficient than the Girsanov method. Perhaps surprisingly, in many situations PPA can also outperform the finite-difference schemes (CRP and CRN) if we impose the restriction that the bias is small. This makes PPA an attractive method for sensitivity analysis because one can efficiently obtain sensitivity estimates and not have to worry about the (unknown) bias caused by finite-difference approximations.

This paper is organized as follows. In Section \ref{sec:pre} we provide the relevant mathematical background and also discuss the existing schemes for sensitivity analysis in greater detail. We also present our main result which shows that for a suitably defined random variable $\hat{s}_\theta(f,T)$, relation \eqref{sens:exact} is satisfied. In Section \ref{sec:ppa} we describe PPA which is a simple method to obtain realizations of $\hat{s}_\theta(f,T)$. In Section \ref{sec:ex} we consider many examples to illustrate the efficiency of PPA and compare its performance with other methods. Finally in Section \ref{sec:conc} we conclude and discuss future research directions.

\section{Preliminaries}\label{sec:pre}

Recall the description of the reaction network from the previous section and suppose that the propensity functions depend on a real-valued parameter $\theta$. We can model the reaction dynamics by a continuous time Markov process whose generator\footnote{The generator of a Markov process is an operator which specifies the rate of change of the distribution of the process. See Chapter 4 in \cite{EK} for more details.} is given by
\begin{align*}
\mathbb{A}_\theta f(x) = \sum_{k=1}^K \lambda_k(x,\theta) \Delta_{\zeta_k} f(x),
\end{align*}
where $f : \N_0^d \to \R$ is any bounded function and $\Delta_{\zeta}f(x) = f(x+\zeta)-f(x)$. Under some mild conditions on the propensity functions (see Condition 2.1 in \cite{Our}), a Markov process $(X_\theta(t))_{t \geq 0}$ with generator $\mathbb{A}_\theta$ and any initial state $x_0$, exists uniquely.
The random time-change representation of Kurtz (see Chapter 7 in \cite{EK}) shows that this process can be expressed as
\begin{align}
\label{main:rtcrep}
X_\theta(t) = x_0 + \sum_{k=1}^K Y_k\left( \int_{0}^{t}  \lambda_k( X_\theta(s) ,\theta  ) ds \right) \zeta_k,
\end{align}
where $\{Y_k : k = 1, . . . ,K\}$ is a family of independent unit rate Poisson processes.

In this paper we are interested in computing the sensitivity value $S_{\theta}(f,T)$ defined by \eqref{def:sensitivity}. Assume that we can express the sensitivity value as \eqref{sens:exact} for some random variable $s_\theta(f,T)$. If we are able to generate $N$ independent samples $s^{(1)}_\theta(f,T) , \dots, s^{(N)}_\theta(f,T) $ from the distribution of $s_\theta(f,T) $, then $S_{\theta}(f,T)$ can be estimated as
\begin{align}
\label{sens_est1}
\hat{S}_\theta(f,T) = \frac{1}{N} \sum_{i = 1}^N s^{(i)}_{\theta}(f,T),
\end{align}
which is a random variable with mean $\mu_N$ and variance $\sigma^2_N$ given by 
\begin{align}
\label{sens_est_var}
\mu_N = \E\left(  \frac{1}{N} \sum_{i = 1}^N s^{(i)}_\theta(f,T)  \right) = \E(s_\theta(f,T) )  \ \textnormal{  and  } \   \sigma^2_N = \mathrm{Var} \left( \frac{1}{N} \sum_{i = 1}^N s^{(i)}_\theta(f,T)  \right) = \frac{1}{N} \mathrm{Var}(s_\theta(f,T) ).
\end{align}
Due to the Central Limit Theorem, for large values of $N$, the distribution of $\hat{S}_\theta(f,T)$ is approximately normal with mean $\mu_N$ and variance $\sigma^2_N$. Hence for any interval $[a,b] \subset \R$ the probability $\P(\hat{S}_\theta(f,T) \in [a,b])$ can be approximated as
\begin{align}
\label{probestimate}
\P\left(  \hat{S}_\theta(f,T) \in [a,b] \right) \approx \Phi\left( \frac{b - \mu_N}{ \sigma_N } \right) -\Phi\left( \frac{a - \mu_N}{ \sigma_N } \right),
\end{align}
where $\Phi(\cdot)$ is the cumulative distribution function for the standard normal distribution and $\sigma_N$ is the standard deviation. Generally $\mu_N$ and $\sigma_N$ are unknown but they can be estimated from the sample as
\begin{align*}
\mu_N \approx \hat{S}_\theta(f,T)  \  \textnormal{  and  }  \  \sigma_N \approx   \sqrt{\frac{1}{N(N-1)} \sum_{i = 1}^N  \left(  s^{(i)}_\theta(f,T)  - \hat{S}_\theta(f,T)  \right)^2}.
\end{align*}

Note that the mean $\mu_N$ is just the \emph{one-point estimate} for the sensitivity value while the standard deviation $\sigma_N$ measures the statistical spread around $\mu_N$. The standard deviation $\sigma_N$ can also be seen as the estimation error. From \eqref{sens_est_var} it is immediate that  $\sigma_N$ is directly proportional to $\mathrm{Var}(s_\theta(f,T) )$ but inversely proportional to $N$. Hence if $\mathrm{Var}(s_\theta(f,T) )$ is high then a larger number of samples is needed to achieve a certain statistical accuracy.

In finite-difference schemes, instead of $S_{\theta}(f,T)$ one estimates a finite-difference $S_{\theta,h}(f,T)$ of the form \eqref{fd:form} which can be written as $S_{\theta,h}(f,T) = \E\left( s_{\theta,h}(f,T)  \right)$ for 
\begin{align}
\label{fd:form2}
s_{\theta,h}(f,T) = \frac{ f( X_{\theta +h} (T)  )  - f( X_{\theta } (T)  )   }{h}.
\end{align}
Here the Markov processes $X_\theta$ and $X_{\theta+h}$ are defined on the same probability space and they have generators $\mathbb{A}_{\theta}$ and $\mathbb{A}_{\theta+h}$ respectively. In this case, the estimator mean $\mu_N$ and variance $\sigma^2_N$ are given by \eqref{sens_est_var} with $s_{\theta}(f,T)$ replaced by $s_{\theta,h}(f,T)$.

\subsection{Finite-difference schemes}\label{subsec:fdschemes}

The main idea behind the various finite-difference schemes is that by coupling the processes $X_\theta$ and $X_{\theta+h}$, one can increase the \emph{covariance} between $f( X_{\theta +h} (T))$ and $ f( X_{\theta } (T)  )$, thereby reducing the variance of $s_{\theta,h}(f,T) $ and making the estimation procedure more efficient. As mentioned in the introduction, three such couplings suggested in the existing literature are : Common Reaction Numbers (CRN) (see \cite{KSR1}), Common Reaction Paths (CRP) (see \cite{KSR1}) and Coupled finite-differences (CFD) (see \cite{DA}). Among these we only consider the two best performing schemes, CRP and CFD, in this paper.

In CRP the processes $X_\theta$ and $X_{\theta+h}$ are coupled by their random time-change representations according to 
\begin{align*}
X_\theta(t) = x_0 + \sum_{k =1}^K Y_k \left(  \int_{0}^t \lambda_k( X_\theta(s) ,\theta )  ds\right) \zeta_k  \  \textnormal{  and  } \
X_{\theta +h }(t) = x_0 + \sum_{k =1}^K Y_k \left(  \int_{0}^t \lambda_k( X_{\theta + h}(s) ,\theta +h )  ds\right) \zeta_k,
\end{align*}
where $\{Y_k : k = 1, . . . ,K\}$ is a family of independent unit rate Poisson processes. Note that these Poisson processes are same for both processes $X_\theta$ and $X_{\theta+h}$, indicating that their reaction paths are the same.

In CFD the processes $X_\theta$ and $X_{\theta+h}$ are coupled by their random time-change representations according to 
 \begin{align*}
X_\theta(t) &= x_0 + \sum_{k =1}^K Y_k \left(  \int_{0}^t \lambda_k( X_\theta(s) ,\theta ) \wedge \lambda_k( X_{\theta + h}(s) ,\theta +h )    ds\right) \zeta_k \\
& + \sum_{k =1}^K Y^{(1)}_k \left(  \int_{0}^t \left[ \lambda_k( X_\theta(s) ,\theta ) - \lambda_k( X_\theta(s) ,\theta ) \wedge \lambda_k( X_{\theta + h}(s) ,\theta +h )  \right]  ds\right) \zeta_k \\
\ \textnormal{ and } \ X_{\theta +h }(t) &= x_0 + \sum_{k =1}^K Y_k \left( \int_{0}^t \lambda_k( X_\theta(s) ,\theta ) \wedge \lambda_k( X_{\theta + h}(s) ,\theta +h )    ds\right) \zeta_k \\
& + \sum_{k =1}^K Y^{(2)}_k \left(  \int_{0}^t \left[ \lambda_k( X_{\theta+h}(s) ,\theta+h ) - \lambda_k( X_\theta(s) ,\theta ) \wedge \lambda_k( X_{\theta + h}(s) ,\theta +h )  \right]  ds\right) \zeta_k ,
\end{align*}
where $a \wedge b= \min\{a,b\}$ and $\{Y_k, Y^{(1)}_k, Y^{(2)}_k: k = 1, . . . ,K\}$ is again a family of independent unit rate Poisson processes. Under this coupling, the processes $X_\theta$ and $X_{\theta+h}$ have the same state until the first time the counting process corresponding to $Y^{(1)}_k$ or $Y^{(2)}_k$ fires for some $k$. The probability that this \emph{separation} time will come before $T$ is proportional to $h$, which is usually a small number. Hence for the majority of simulation runs, the processes $X_\theta$ and $X_{\theta+h}$ are together for the whole time interval $[0,T]$, suggesting that they are \emph{strongly coupled}. 

Note that both CRP and CRN are estimating the same quantity $S_{\theta,h}(f,T)$ \eqref{fd:form} and hence they both suffer from the same bias $|S_{\theta,h}(f,T) - S_{\theta} (f,T) |$. The only difference between these two methods is in the coupling of the processes $X_\theta$ and $X_{\theta+h}$, which alters the variance of $s_{\theta,h}(f,T)$. For a given example, the method with a lower variance will be more efficient as it will require lesser number of samples $(N)$ to achieve the desired statistical accuracy. 

For any finite-difference scheme, one can show that the bias $|S_{\theta,h}(f,T) - S_{\theta} (f,T) |$ is proportional to $h$ while $\textnormal{Var}( s_{\theta,h}(f,T))$ is proportional to $1/h$. Therefore by making $h$ smaller we may reduce the bias but we will have to pay a computational cost by generating a large number of samples for the required estimation. As mentioned in the introduction, this trade-off between bias and computational efficiency is the major drawback of finite-difference schemes as one generally does not know the \emph{right} value of $h$ for which $S_{\theta,h}(f,T) $ is \emph{close enough} to $S_{\theta} (f,T) $, while at the same time the variance of $\textnormal{Var}( s_{\theta,h}(f,T))$ is \emph{not too large}. 


\subsection{Unbiased schemes}\label{subsec:unbsdschemes}

Unbiased schemes are desirable for sensitivity estimation because one does not have to worry about the bias and the accuracy of the estimate can be improved by simply increasing the number of samples. The Girsanov method is the first such unbiased scheme (see \cite{Gir} and \cite{PW}). Recall the random time-change representation \eqref{main:rtcrep} of the process $( X_\theta (t) )_{t \geq 0}$. In the Girsanov method, we express $S_{\theta}(f,T)$ as \eqref{sens:exact} where
\begin{align}
\label{rvgir}
s_{\theta}(f,T) = f(X_\theta(T)) \sum_{k=1}^K \left( \int_{0}^{T} \frac{1}{ \lambda_k( X_\theta(t) ,\theta  ) } \frac{  \partial \lambda_k( X_\theta(t) ,\theta  ) }{  \partial \theta }    R_k(dt) - \int_{0}^{T} \frac{ \partial \lambda_k( X_\theta(t) ,\theta )  }{ \partial \theta } dt\right)
\end{align}
and $(R_k(t))_{t \geq 0}$ is the counting process given by
\begin{align*}
R_k(t) = Y_k\left( \int_{0}^{t}  \lambda_k( X_\theta(s) ,\theta  ) ds \right).
\end{align*}
In \eqref{rvgir}, $R_k(dt) = 1$ if and only if the $k$-th reaction fires at time $t$. For any other $t$, $R_k(dt) = 0$. Hence if the $k$-th reaction fires at times $t^{(k)}_1,\dots, t^{(k)}_{n_k}$ in the time interval $[0,T]$, then we can write
\begin{align*}
\int_{0}^{T} \frac{1}{ \lambda_k( X_\theta(t) ,\theta  ) } \frac{  \partial \lambda_k( X_\theta(t) ,\theta  ) }{  \partial \theta }    R_k(dt) = 
\sum_{j=1}^{n_k } \frac{1}{ \lambda_k( X_\theta(t^{(k)}_j) ,\theta  ) } \frac{  \partial \lambda_k( X_\theta(t^{(k)}_j) ,\theta  ) }{  \partial \theta }.
\end{align*}

The Girsanov method is simple to implement because realizations of the random variable $s_\theta(f,T)$ can be easily generated by simulating the paths of the process $X_\theta$ until time $T$. However, as mentioned before, the variance of $s_\theta(f,T)$ can be very high (see \cite{KSR1,KSR2,DA}), which is a serious problem as it implies that a large number of samples are needed to produce statistically accurate estimates. Since simulations of the process $X_\theta$ can be quite cumbersome, generating a large sample can take an enormous amount of time. 

Now consider the common situation where we have \emph{mass-action kinetics} and $\theta$ is the rate constant of reaction $k_0$. In this case, $\lambda_{k_0}$ has the form $\lambda_{k_0}(x,\theta) = \theta \lambda'_{k_0}(x)$ and for every $k \neq k_0$, $\lambda_{k}$ does not depend on $\theta$. Therefore 
\begin{align*}
\frac{  \partial \lambda_k( x,\theta  ) }{  \partial \theta }   =   \left\{
\begin{array}{cc}
\frac{1}{\theta}\lambda_{k_0}(x,\theta)& \textnormal{ if } k = k_0  \\
0 & \textnormal{ otherwise}
\end{array}
\right\}
\end{align*}
and hence the formula \eqref{rvgir} for $s_{\theta}(f,T) $ simplifies to
\begin{align}
\label{rvgir2}
s_{\theta}(f,T) = \frac{1}{\theta} f(X_\theta(T)) \left( N^{k_0}_\theta(T)   - \int_{0}^{T} \lambda_{k_0}( X_\theta(t) ,\theta )dt\right),
\end{align}
where $N^{k_0}_\theta(T)$ is the number of times reaction $k_0$ fires in the time interval $[0,T]$. This formula clearly shows that the Girsanov method cannot be used to estimate the sensitivity value at $\theta = 0$ even though $S_\theta(f,T)$ \eqref{def:sensitivity} is well-defined. This is a big disadvantage since the sensitivity value at $\theta = 0$ is useful for understanding network design as it informs us whether the presence or absence of reaction $k_0$ influences the output or not. Unfortunately the problem with the Girsanov method is not just limited to $\theta =0$. Even for $\theta$ close to $0$, the variance of $s_{\theta}(f,T)$ can be very high, rendering this method practically infeasible (see \cite{Our}). This is again a serious drawback as reaction rate constants with small values are frequently encountered in systems biology and other areas.

These issues with the Girsanov method severely restrict its use and also highlight the need for new unbiased schemes for sensitivity analysis. 
In our recent paper \cite{Our}, we provide a new unbiased scheme based on another sensitivity formula of the form \eqref{sens:exact}. To motivate this formula we first discuss the problem of computing parameter sensitivity in the deterministic setting.

Consider the deterministic model corresponding to the parameter-dependent reaction network described in the introduction. If $x_\theta(t) \in [0,\infty)^d$ denotes the vector of species \emph{concentrations} at time $t$, then the process $(x_\theta(t))_{t \geq 0}$ is the solution of the following system of ordinary differential equations 
\begin{align}
\label{detRRE}
\frac{dx}{dt} = \sum_{k=1}^K \hat{\lambda}_k(x, \theta) \zeta_k
\end{align}
with initial condition $x(0) =x_0$. Here $\hat{\lambda}_k(x,\theta)$ is the parameter dependent reaction flux (see \cite{Goutsias}) of the $k$-th reaction. We assume that each $\hat{\lambda}_k$ is a differentiable function of its arguments. Pick an observation time $T>0$ and a differentiable output function $f :\R^d \to \R$, and suppose we are interested in computing the sensitivity value
\begin{align*}
\frac{ \partial f(x_\theta(T))}{\partial \theta}. 
\end{align*}
Using \eqref{detRRE} we can write
\begin{align*}
f(x_\theta(T)) = f(x_0) +\sum_{k=1}^K   \int_{0}^T \hat{\lambda}_k(x_\theta(t), \theta) (\nabla f(x_\theta(t))\bullet \zeta_k) dt,
\end{align*}
where $\nabla f(x)$ denotes the gradient of the map $x \mapsto f(x)$ and $\bullet$ denotes the standard inner product on $\R^d$.
Differentiating the last equation with respect to $\theta$ gives us
\begin{align}
\label{deterministicparamsens}
\frac{ \partial f(x_\theta(T))}{\partial \theta} = \sum_{k=1}^K   \left( \int_{0}^T \frac{ \partial \hat{\lambda}_k(x_\theta(t), \theta) }{\partial \theta} (\nabla f(x_\theta(t))\bullet \zeta_k)  dt 
+\int_{0}^T   \nabla \left( \hat{\lambda}_k( x_\theta(t), \theta)   (\nabla f(x_\theta(t))\bullet \zeta_k)  \right)\bullet y_\theta(t)   dt \right), 
\end{align}
where $y_\theta(t) = \partial x_\theta(t) / \partial \theta$. Note that the values of $y_\theta(t)$ can be easily computed by solving the ordinary differential equation obtained by differentiating \eqref{detRRE} with respect to $\theta$. Hence in the deterministic setting, computation of the parameter sensitivity is relatively straightforward.

Observe that relation \eqref{deterministicparamsens} helps us in viewing parameter sensitivity as the sum of two parts. The first part measures the contribution due to the sensitivity of the reaction fluxes ($\hat{\lambda}_k$) while the second part measures the contribution due to the sensitivity of the states $x_\theta(t)$ for $t \geq 0$. The main result in \cite{Our} provides such a decomposition for parameter sensitivity in the stochastic setting. However, unlike the deterministic scenario, measuring the second contribution is computationally very challenging. In order to present this result, we need to define certain quantities. Let $(X_\theta(t))_{t \geq 0}$ be a Markov process with generator $\mathbb{A}_\theta$. For any $f: \N^d_0 \to \R$, $t \geq 0$ and $x \in \N^d_0$ define
\begin{align*}
\Psi_{\theta}(x,f,t) = \mathbb{E} \left(  f(X_\theta(t)) \middle\vert X_{\theta}(0) = x\right),
\end{align*}
and for any $k=1,\dots,K$ let
\begin{align}
\label{defdtheta}
D_{\theta}(x,f,t,k) & = \Psi_{\theta}(x + \zeta_k,f,t)  -   \Psi_{\theta}(x,f,t).
\end{align}
Define $\lambda_0(x,\theta) = \sum_{k=1}^K \lambda_k(x,\theta)$ and let $\sigma_0,\sigma_1,\sigma_2$ denote the successive jump times\footnote{We define $\sigma_0 = 0$ for convenience} of the process $X_\theta$. Theorem 2.3 in \cite{Our} shows that $S_\theta(f,T)$ satisfies \eqref{sens:exact} with
\begin{align}
\label{expr:stheta}
s_\theta(f,T) =  \sum_{k = 1}^K \left( \int_{0}^T \frac{ \partial \lambda_k ( X_\theta (t) , \theta ) }{ \partial \theta   } \Delta_{\zeta_k} f(X_\theta(t)) dt 
+ \sum_{  i = 0 : \sigma_i < T   }^{\infty}  \frac{  \partial  \lambda_k ( X_\theta(\sigma_{i}) ,\theta ) }{ \partial \theta}  R_{\theta}( X_\theta( \sigma_i) ,f, T -\sigma_i ,k) \right), 
\end{align} 
where
\begin{align}
\label{defn_rtheta}
R_{\theta}(x,f,t,k) =  \int_{0}^{t} \left(  D_{\theta}(x,f,s,k)   - \Delta_{\zeta_k} f(x) \right) e^{ - \lambda_0(x,\theta) (t-s)} ds.
\end{align} 

This result was proved by coupling the processes $X_\theta$ and $X_{\theta+h}$ as in CFD (see Section \ref{subsec:fdschemes}) and computing the limit of the finite-difference $S_{\theta,h}(f,T)$ (see \eqref{fd:form}) as $h\to 0$. Similar to the deterministic case, this result shows that parameter sensitivity $S_{\theta,h}(f,T)$ can be seen as the sum of two parts : the first part measures the contribution due to the sensitivity of the propensity functions ($\lambda_k$) while the second part measures the contribution due to the sensitivity of the states $X_\theta(\sigma_i)$ of the Markov process at the jump times $\sigma_i$. Computing the latter contribution is difficult because it involves the function $R_\theta$ which generally does not have an explicit formula. To overcome this problem, one needs to estimate all the quantities of the form $R_{\theta}(x,f,t,k) $ that appear in \eqref{expr:stheta}. However the number of these quantities is proportional to the number of jumps of the process before time $T$, which can be quite high even for small networks. If we estimate each of these quantities \emph{serially}, as and when they appear, using a collection of independently generated paths of the process $X_\theta$, then the problem becomes computationally intractable for most examples of interest. In \cite{Our} we devised a scheme, called the \emph{Auxiliary Path Algorithm}(APA), to estimate all these quantities in \emph{parallel} by generating a fixed number of \emph{auxiliary} paths in addition to the main path.  APA stores information about all the required quantities in a big \emph{Hashtable} and tracks the states visited by the auxiliary paths to estimate those quantities. Due to all the necessary book-keeping, APA is hard to implement and it also has high memory requirements. In fact the space and time complexity of APA scales linearly with the number of jumps in the time interval $[0,T]$ and this makes APA practically unusable for large networks or for large values of observation time $T$. Moreover APA only works well if the stochastic dynamics visits the same states again and again, which may not be generally true.

The above discussion suggests that if we can modify the expression for $s_\theta(f,T)$ in such a way that only a small fraction of unknown quantities (of the form $R_{\theta}(x,f,t,k) $) require estimation, then it can lead to an efficient method for sensitivity estimation. We exploit this idea in this paper. By adding \emph{extra randomness} to the random variable $s_\theta(f,T)$, we construct another random variable $\hat{s}_\theta(f,T)$, which has the same expectation as $s_\theta(f,T)$
\begin{align}
\label{expispreserved}
\E\left( s_\theta(f,T)  \right) =  \E\left(  \hat{s}_\theta(f,T) \right),
\end{align}
even though its distribution may be different.
We then show that realizations of the random variable $\hat{s}_\theta(f,T)$ can be easily obtained through a simple procedure. This gives us a new method for unbiased parameter sensitivity estimation for stochastic reaction networks.

\subsection{Construction of $\hat{s}_\theta(f,T) $} \label{sec:constrcstheta}

We now describe the construction of the random variable $\hat{s}_\theta(f,T) $ introduced in the previous section. Let $(X_\theta (t))_{t \geq 0}$ be the Markov process with generator $\mathbb{A}_{\theta}$ and initial state $x_0$. Let $\sigma_0,\sigma_1,\dots$ denote the successive jump times of this process.
The total number of jumps until time $T$ is given by the random variable
\begin{align}
\label{defn_eta}
\eta = \max \{ i \geq 0 :  \sigma_i < T \}.
\end{align}
Note that for any $T>0$ we have $\eta \geq 1$ because $\sigma_0 = 0$. If the Markov process $X_\theta$ reaches a state $x$ for which $\lambda_0(x,\theta) = 0$, then $x$ is an \emph{absorbing} state and the process stays in $x$ forever. From \eqref{defn_eta}, it is immediate that for any non-negative integer $i < \eta$, $X_\theta( \sigma_i )$ cannot be an absorbing state. Let $\alpha$ indicate if the final state $X_\theta( \sigma_\eta )$ is absorbing
\begin{align*}
\alpha= \left\{
\begin{tabular}{cc}
1 & \textnormal{ if } $\lambda_0( X_\theta( \sigma_\eta ) , \theta) =0$ \\
0 & \textnormal{ otherwise}
\end{tabular} \right\}.
\end{align*}
For each $i=0,\dots,(\eta-\alpha)$ let $\gamma_i$ be an independent exponentially distributed random variable with rate $\lambda_0( X_\theta( \sigma_i ),\theta )$ and define
\begin{align*}
\Gamma_i =   \left\{
\begin{array}{cc}
1 & \textnormal{ if } \gamma_i < (T - \sigma_i)  \\
0 & \textnormal{ otherwise}
\end{array}
\right\}. 
\end{align*}

For each $i=0,\dots,(\eta-\alpha)$ and $k=1,\dots,K$ let $ \beta_{ki} $ be given by
\begin{align*}
 \beta_{ki} = \textnormal{Sign}\left(  \frac{ \partial \lambda_k( X_\theta( \sigma_i) ,\theta ) }{  \partial \theta }   \right) \  \textnormal{ where } \ \textnormal{Sign}(x)= \left\{
\begin{array}{cc}
1   &   \textnormal{ if }  x > 0  \\
-1   &  \textnormal{ if }   x < 0  \\
0 &    \textnormal{ if } x= 0
\end{array}
\right\}.
\end{align*}
Fix a normalizing constant $c>0$. The choice of $c$ and its role  will be explained later in the section. If $\beta_{ki} \neq 0$ and $\Gamma_i = 1$, then let $\rho_{ki}^{(c)}$ be an independent $\N_0$-valued random variable whose distribution is Poisson with parameter
\begin{align}
\label{rateofpoisoon}
\frac{c}{ \lambda_0( X_\theta(\sigma_i) ,\theta ) }  \left|  \frac{ \partial \lambda_k (X_\theta(\sigma_i) ,\theta ) }{ \partial \theta }  \right|.
\end{align}
Here the denominator $ \lambda_0( X_\theta(\sigma_i) ,\theta ) $ is non-zero because for $i \leq (\eta-\alpha)$ the state $X_\theta(\sigma_i)$ cannot be absorbing. On the event $\{ \beta_{ki} \neq 0,  \Gamma_i = 1 \textnormal{ and } \rho_{ki}^{(c)} >0\}$ we define another random variable as 
\begin{align}
\label{evaluatedki}
\hat{D}_{ki}=  f(Z_1(T - \sigma_i -\gamma_i )) - f(Z_2(T - \sigma_i- \gamma_i)),
\end{align}
where $(Z_1(t) )_{t \geq 0}$ and $(Z_2(t) )_{t \geq 0}$ are two processes which are coupled by the following random time-change representations:
\begin{align*}
Z_1(t) &=( X_\theta(\sigma_i)+ \zeta_k) + \sum_{k = 1}^K \hat{Y}_k\left( \int_{0}^{t}  \lambda_k( Z_1(s) ,\theta) \wedge \lambda_k( Z_2(s) ,\theta) ds \right)\zeta_k \\
&+  \sum_{k = 1}^K \hat{Y}^{(1)}_k\left( \int_{0}^{t} \left(\lambda_k( Z_1(s) ,\theta) -  \lambda_k( Z_1(s) ,\theta) \wedge \lambda_k( Z_2(s) ,\theta) \right) ds \right) \zeta_k \\  \textnormal{ and } \  
Z_2(t) &=X_\theta(\sigma_i)+ \sum_{k = 1}^K \hat{Y}_k\left( \int_{0}^{t}  \lambda_k( Z_1(s) ,\theta) \wedge \lambda_k( Z_2(s) ,\theta) ds \right)\zeta_k \\
&+  \sum_{k = 1}^K \hat{Y}^{(2)}_k\left( \int_{0}^{t} \left(\lambda_k( Z_2(s) ,\theta) -  \lambda_k( Z_1(s) ,\theta) \wedge \lambda_k( Z_2(s) ,\theta) \right) ds \right) \zeta_k,
\end{align*}
where $\{\hat{Y}_k, \hat{Y}^{(1)}_k,\hat{Y}^{(2)}_k : k =1,\dots,K\}$ is an independent family of unit rate Poisson processes. This coupling is similar to the coupling used by CFD (see Section \ref{subsec:fdschemes}). Note that  $(Z_1(t) )_{t \geq 0}$ and $(Z_2(t) )_{t \geq 0}$ are Markov processes with generator $\mathbb{A}_\theta$ and initial states $( X_\theta(\sigma_i)+ \zeta_k) $ and $X_\theta(\sigma_i)$ respectively. Therefore 
\begin{align}
\label{diff_estimation}
\E\left( \hat{D}_{ki}  \middle\vert  T - \sigma_i -\gamma_i , X_\theta(\sigma_i)  \right) & = \Psi_{\theta}(X_\theta(\sigma_i) +\zeta_k,f, T - \sigma_i -\gamma_i)- \Psi_{\theta}(X_\theta(\sigma_i) ,f, T - \sigma_i -\gamma_i) \notag \\
&= D_\theta(X_\theta(\sigma_i) ,f,T - \sigma_i -\gamma_i,k),
\end{align}
where $D_\theta$ is defined by \eqref{defdtheta}.
In other words, given $T - \sigma_i -\gamma_i =t$ and $X_\theta(\sigma_i) = x$, the mean of the random variable $\hat{D}_{ki}$ is just $D_\theta(x,f,t,k)$. The above coupling between $(Z_1(t) )_{t \geq 0}$ and $(Z_2(t) )_{t \geq 0}$ makes them strongly correlated, thereby lowering the variance of the difference $\hat{D}_{ki} = f(Z_1(t)) -f(Z_2(t))$. This strong correlation is evident from the fact that if $Z_1(s) = Z_2(s)$ for some $s\geq0$ then $Z_1(u) = Z_2(u)$ for all $u\geq s$. 
Finally, if the last state $X_\theta( \sigma_\eta )$ is absorbing (that is, $\alpha=1$) and $\partial \lambda_k( X_\theta( \sigma_\eta ) ,\theta )/ \partial \theta$ is non-zero then we define another random variable $\hat{I}_{k \eta}$ as
\begin{align}
\label{evaluateiketa}
\hat{I}_{k \eta} = \int_{0}^{T - \sigma_\eta} f(Z(s))ds,
\end{align}
where $(Z(t))_{t \geq 0}$ is an independent Markov process with generator $\mathbb{A}_\theta$ and initial state $X_\theta(\sigma_\eta)+\zeta_k$.
Note that
\begin{align}
\label{int_estimation}
\E\left( \hat{I}_{k \eta} \vert X_\theta(\sigma_\eta), T - \sigma_\eta \right) = \int_{0}^{T - \sigma_\eta} \Psi_\theta( X_\theta(\sigma_\eta) +\zeta_k, f, s)ds.
\end{align}

We are now ready to provide an expression for $\hat{s}_\theta(f,T)$. Let
\begin{align*}
\Delta t_i  = \left\{
\begin{array}{cc}
 (\sigma_{i+1} - \sigma_i) & \textrm{ for } i = 0,\dots,\eta-1  \\
 (T - \sigma_\eta) & \textrm{ for } i = \eta  \\
\end{array}\right\}
\end{align*}
and define
\begin{align}
\label{expr:sthetahat}
\hat{s}_\theta(f,T) & = \sum_{k=1}^K \sum_{i=0}^{\eta - \alpha } \left[   \frac{ \partial \lambda_k ( X_\theta (  \sigma_i  ) , \theta ) }{ \partial \theta   } \Delta_{\zeta_k} f(X_\theta(  \sigma_i  )) \left( \Delta t_i - \frac{  \Gamma_i }{ \lambda_0( X_\theta (  \sigma_i  )  ,\theta ) }  \right) + \frac{1}{c}  \beta_{ki}  \Gamma_i \rho^{(c)}_{ki} \hat{D}_{ki} \right] \\
& + \alpha \sum_{k=1}^K  \left[   \frac{ \partial \lambda_k ( X_\theta (  \sigma_\eta  ) , \theta ) }{ \partial \theta   } \left( \hat{I}_{ k \eta }   - (T - \sigma_\eta) f ( X_\theta (  \sigma_\eta  ) ) \right)\right]  .  \notag
\end{align}
By a simple conditioning argument we prove in Section \ref{asec:proof} of \emph{Supplementary Materials} that relation \eqref{expispreserved} holds.

We mentioned before that it is difficult to obtain realizations of $s_\theta(f,T)$ because one needs to estimate a quantity like $R_{\theta}(x,f,t,k)$ at each jump time $\sigma_i < T$, that requires simulation of new paths of the process $X_\theta$ which is computationally expensive. Similarly for $\hat{s}_\theta(f,T)$ we need to compute $\hat{D}_{ki}$ at each $\sigma_i < T$ which also requires simulation of new paths of the process $X_\theta$. However the main difference is that to compute $\hat{s}_\theta(f,T)$, $\hat{D}_{ki}$ is only needed if the Poisson random variable $\rho^{(c)}_{ki}$ is strictly positive. If we can effectively control the number of positive $\rho^{(c)}_{ki}$-s then we can efficiently generate realizations of $\hat{s}_\theta(f,T)$. We later explain how this control can be achieved using the positive parameter $c$ introduced in the definition of the random variable $\rho^{(c)}_{ki}$ (see \eqref{rateofpoisoon}). The construction of $\hat{s}_\theta(f,T)$ outlined above, also provides a recipe for obtaining realizations of this random variable and hence gives us a method for estimating the parameter sensitivity $S_\theta(f,T)$. We call this method, the \emph{Poisson Path Algorithm} (PPA) because at each jump time $\sigma_i$, the crucial decision of whether new paths of the process $X_\theta$ are needed (for $\hat{D}_{ki}$) is based on the value of a Poisson random variable $\rho^{(c)}_{ki}$. We describe PPA in greater detail in the next section.

We now discuss how the positive parameter $c$ can be selected.
Let $\eta_{ \textnormal{req}}$ denote the total number of positive $\rho^{(c)}_{ki}$-s that appear in \eqref{expr:sthetahat}. This is the number of $\hat{D}_{ki}$-s that are required to obtain a realization of $\hat{s}_\theta(f,T)$. It is immediate that $\eta_{ \textnormal{req}}$ is bounded above by $\rho_{ \textnormal{tot} } = \sum_{k=1}^K \sum_{i=0}^{\eta -\alpha} \rho^{(c)}_{ki}$, which is a Poisson random variable with parameter $c R_{\textnormal{tot}} $, where
\begin{align}
\label{defnrtot}
R_{\textnormal{tot}} = \sum_{k=1}^K \sum_{i=0}^{\eta -\alpha}  \frac{1}{ \lambda_0( X_\theta(\sigma_i) ,\theta ) }  \left|  \frac{ \partial \lambda_k (X_\theta(\sigma_i) ,\theta ) }{ \partial \theta }  \right|.
\end{align}
By picking a small $c>0$ we can ensure that $\E\left( \rho_{ \textnormal{tot} } \right) = c \E\left(  R_{\textnormal{tot}} \right)$ is small, which would also guarantee that $\rho_{ \textnormal{tot} }$ and $\eta_{ \textnormal{req}}$ are small. Specifically we choose a small positive integer $M_0$ (like $10$, for instance) and set 
\begin{align}
\label{normconstant}
c = \frac{M_0}{\E\left(  R_{\textnormal{tot}} \right)},
\end{align}
where $\E\left(  R_{\textnormal{tot}} \right)$ is estimated using $N_0$ simulations of the process $X_\theta$ in the time interval $[0,T]$. The choice of $N_0$ is not critical and typically a small value (like $100$, for example) is sufficient to provide a decent estimate of $\E\left(  R_{\textnormal{tot}} \right)$. The role of parameter $M_0$ is also not very important in determining the efficiency of PPA. If $M_0$ increases then $\eta_{ \textnormal{req}}$ increases as well, and the computational cost of generating each realization of $\hat{s}_\theta(f,T)$ becomes higher. However as $\eta_{ \textnormal{req}}$ increases the variance of $\hat{s}_\theta(f,T)$ decreases and hence fewer realizations of $\hat{s}_\theta(f,T)$ are required to achieve the desired statistical accuracy. These two effects usually offset each other and the overall efficiency of PPA remains relatively unaffected. Note that PPA provides unbiased estimates for the sensitivity values, regardless of the choice of $N_0$ and $M_0$.

\section{The Poisson Path Algorithm (PPA)} \label{sec:ppa}

We now describe PPA which is essentially a method for obtaining realizations of the random variable $\hat{s}_{\theta}(f,T)$ (defined by \eqref{expr:sthetahat}) for some observation time $T$ and some output function $f$. Since $S_{\theta}(f,T) = \E\left( \hat{s}_{\theta}(f,T)\right)$, we can estimate $S_{\theta}(f,T)$ by generating $N$ realizations $s^{(1)}_{\theta}(f,T),\dots,s^{(N)}_{\theta}(f,T)$ of the random variable $\hat{s}_\theta(f,T)$ and then computing their empirical mean \eqref{sens_est1}.

Let $x_0$ denote the initial state of the process $X_\theta$ and assume that the function \emph{rand()} returns independent samples from the uniform distribution on $[0,1]$. We simulate the paths of our Markov process using Gillespie's SSA \cite{GP}. When the state of the process is $x$, the next time increment ($\Delta t$) and reaction index ($k$) is given by the function $\Call{SSA}{x}$ (see Algorithm \ref{SSA} in Section \ref{app:methods} of \emph{Supplementary Materials}). Before we can generate $\hat{s}_{\theta}(f,T)$ we need to fix a normalization parameter $c$ according to \eqref{normconstant}. 
For this we estimate $\E\left(  R_{\textnormal{tot}} \right)$ using $N_0$ simulations of the Markov process (see Algorithm \ref{estimatenormalization} in Section \ref{app:methods} of \emph{Supplementary Materials}).

Once $c$ is chosen, a single realization of the random variable $\hat{s}_\theta(f,T)$ (given by \eqref{expr:sthetahat}) can be computed using $\Call{GenerateSample}{x_0,T,c}$ (Algorithm \ref{gensensvalue}). This method simulates the process $X_{\theta}$ according to SSA and at each state $x$ and jump time $t$, the following happens:
\begin{itemize}
\item If $x$ is an absorbing state ($\lambda_0(x,\theta)=0$) then $t$ is the last jump time before $T$ ($t=\sigma_\eta$). For each $k=1,\dots,K$ such that $\partial \lambda_k(x,\theta)/ \partial \theta \neq 0$, the quantity $\hat{I}_{k \eta}$ (see \eqref{evaluateiketa}) is evaluated using $\Call{EvaluateIntegral}{x +\zeta_k,T-t}$ (see Algorithm \ref{evaluateintegralindependent} in Section \ref{app:methods} of \emph{Supplementary Materials}) and then used to update the sample value according to \eqref{expr:sthetahat}.

\item If $x$ is \emph{not} an absorbing state, then $t= \sigma_i$ for some jump time $\sigma_i$ with $i<\eta$. The exponential random variable $\gamma$ (where $\gamma = \gamma_i$ in \eqref{expr:sthetahat}) is generated and for each $k=1,\dots,K$ such that $\partial \lambda_k(x,\theta)/ \partial \theta \neq 0$, the Poisson random variable $n$ (where $n = \rho^{(c)}_{ki}$ in \eqref{expr:sthetahat}) is also generated.
If $\gamma < (T-t)$ and $n>0$ then the quantity $\hat{D}_{ki}$ (see \eqref{evaluatedki}) is evaluated using $\Call{EvaluateCoupledDifference}{x,x+\zeta_k,T-t-\gamma}$ (see Algorithm \ref{gensensvalue} in Section \ref{app:methods} of \emph{Supplementary Materials}) and then used to update the sample value according to \eqref{expr:sthetahat}. To generate a Poisson random variable with parameter $r$ we use the function $\Call{GeneratePoisson } { r }$ (see Algorithm \ref{genpoissrv} in Section \ref{app:methods} of \emph{Supplementary Materials}).
\end{itemize}

\begin{algorithm}[h]
\caption{Generates one realization of $\hat{s}_\theta(f,T)$ according to \eqref{expr:sthetahat} }
\label{gensensvalue}     
\begin{algorithmic}[1]
\Function{GenerateSample}{$x_0,T,c$}
    \State Set $x = x_0$, $t = 0$ and $s = 0$
\While {$ t <  T $} 
\State  Calculate $(\Delta t, k_0 ) =$ SSA$(x)$
\State Update $\Delta t  \gets \min\{\Delta t, T -t\}$
 \If {$\lambda_0(x,\theta) > 0$} 
 \State  Set $\gamma = -\frac{ \log(rand())}{ \lambda_0(x,\theta) } $
\EndIf
\For {$k = 1$ to $K$}	
\State Set $r = \left|  \frac{ \partial \lambda_k(x,\theta) }{ \partial \theta }  \right|$ and $\beta = \textnormal{Sign}\left(  \frac{ \partial \lambda_k(x,\theta) }{ \partial \theta }\right)$
\If {$r > 0$}
\If {$\lambda_0(x,\theta) = 0$}
\State  Update $s \gets s + \left( \frac{ \partial \lambda_k(x,\theta) }{ \partial \theta } \right) \left( \Call{EvaluateIntegral}{x +\zeta_k,T-t} - (T-t)f(x)\right)$
\Else 
\State Set $n = \Call{GeneratePoisson } { \frac {r c }{ \lambda_0(x,\theta) }  }$
\If{ $\gamma < (T - t)$ }
\State Update $s \gets s + \left( \frac{ \partial \lambda_k(x,\theta) }{ \partial \theta } \right) (f(x+\zeta_k) -f(x) ) \left( \Delta t - \frac{ 1 }{ \lambda_0(x,\theta) }  \right)$
\If {$n > 0$}
\State Update $s \gets s +\left( \frac{\beta n}{c}  \right) \Call{EvaluateCoupledDifference}{x,x+\zeta_k,T-t-\gamma} $
\EndIf
\Else
\State Update $s \gets s + \left( \frac{ \partial \lambda_k(x,\theta) }{ \partial \theta } \right) (f(x+\zeta_k) -f(x) ) \Delta t$
\EndIf
\EndIf
\EndIf
\EndFor
\State Update $t \gets t +\Delta t$ and $x \gets x+\zeta_{k_0}$
\EndWhile 
\State \Return $s$
\EndFunction
\end{algorithmic}
\end{algorithm}

\section{Numerical Examples} \label{sec:ex}

In this section, we present many examples to compare the performance of PPA with the other methods for sensitivity estimation. Among these other methods, we consider the Girsanov method which is unbiased, and the two best-performing finite-difference schemes, CFD and CRP, which are of course biased. We can compare the performance of different methods by comparing the time they need to produce a \emph{statistically accurate estimate} of the true sensitivity value $S_{\theta}(f,T)$ given by \eqref{def:sensitivity}. Our first task is to find a way to judge if an estimate of $S_\theta(f,T)$ is statistically accurate.

Suppose that a method estimates parameter sensitivity using $N$ samples whose mean is $\mu_N$ and standard deviation is $\sigma_N$ (see Section \ref{sec:pre}). Assume that the true value of $S_{\theta}(f,T)$ is $s_0$. Let $a = s_0 - 0.05|s_0|$ and $b =s_0 + 0.05|s_0|  $, where $|\cdot|$ denotes the absolute value function. Then $[a,b]$ is the $5\%$ interval around the true value $s_0$. Under the Central Limit approximation, the estimator $\hat{S}_\theta(f,T)$ can be seen as a normally distributed random variable with mean $\mu_N$ and variance $\sigma^2_N$. Hence using \eqref{probestimate} we can calculate the probability $$p = \P( \hat{S}_\theta(f,T) \in [a,b] )$$ that the estimator lies within the $5\%$ interval around $s_0$. Henceforth we refer to $p$ as the \emph{confidence level} for the estimator. Observe that the statistical accuracy of an estimate can be judged from the value of $p$: the higher the value of $p$, the more accurate is the estimate. By calculating the probability $p$ using the $5\%$ interval around $s_0$, we ensure that the statistical precision is high or low depending on whether the sensitivity value is small or large.

We mentioned in Section \ref{sec:pre} that the standard deviation $\sigma_N$ is inversely proportional to the number of samples $N$, which shows that $\sigma_N \to 0$ as $N \to \infty$. For an unbiased scheme (Girsanov and PPA), $\mu_N \to s_0$ as $N \to \infty$, due to the law of large numbers. Therefore from \eqref{probestimate} it follows that for an unbiased scheme, the confidence level $p$ can be made as close to $1$ as desired by picking a large sample size $N$. However the same is not true for finite-difference schemes such as CFD and CRP. In such schemes $\mu_N \to S_{\theta,h}(f,T)$ as $N \to \infty$, where $S_{\theta,h}(f,T)$ given by \eqref{fd:form} is generally different from the true value $S_{\theta}(f,T)=s_0$ because of the bias. If the bias is large, then $S_{\theta,h}(f,T)$ does not lie inside the $5\%$ interval $[a,b]$ around $s_0$, and in this case the confidence level $p$ is close to $0$ for large values of $N$. Therefore if high confidence levels are desired with finite-difference schemes, then a small $h$ must be chosen to ensure that the bias $|S_{\theta,h}(f,T) -S_{\theta}(f,T)|$ is low. However the variance $\sigma^2_N$ of the associated estimator scales like $1/h$ (see Section \ref{subsec:fdschemes}), implying that if $h$ is \emph{too small} then an extremely large sample size is required to make $\sigma_N$ sufficiently small in order to achieve high confidence levels. Generating a large sample may impose a heavy computational burden as simulations of the underlying Markov process can be quite cumbersome.

In this paper, we compare the efficiency of different methods by measuring the CPU time\footnote{All the computations in this paper were performed using C++ programs on a Linux machine with a 2.4 GHz Intel Xeon processor.} that is needed to produce a sample with least possible size, such that the corresponding estimate has a certain threshold confidence level. This threshold confidence level is \emph{close} to $1$ (either $0.95$ or $0.99$) and hence the produced estimate is statistically quite accurate. The discussion in the preceding paragraph suggests that for finite-difference schemes, there is a trade-off between the statistical accuracy and the computational burden. This trade-off is illustrated in Figure \ref{fig:tradeoff} which depicts $4$ cases that correspond to $4$ values of $h$ arranged in decreasing order. Hence $h$ in Case 1 is the largest while the $h$ in Case 4 is the smallest. We describe these cases below.
\begin{itemize}
\item[{\bf Case 1:}] Since $h$ is large, the computational burden is low but due to the large bias, the estimator distribution (normal curve in Figure \ref{fig:tradeoff}) puts very little mass on the $5\%$ interval around the true sensitivity value. Therefore the confidence level $p$ is low and the estimate is unacceptable. 
\item[{\bf Case 2:}] In comparison to Case 1, $h$ is smaller, the computational burden is higher and the bias is lower. However the estimator distribution still does not put enough mass on the $5\%$ interval and so the value of $p$ is not high enough for the estimate to be acceptable.
 \item[{\bf Case 3:}] In comparison to Case 2, $h$ is smaller and so the computational burden is higher, but fortunately the bias is small enough to ensure that the estimator distribution puts nearly all its mass on the $5\%$ interval. Therefore the confidence level $p$ is close to $1$ and the estimate is acceptable.
  \item[{\bf Case 4:}] Here $h$ is smallest and so the computational burden is the highest among all the cases. However in comparison to Case 3, the additional computational burden in unnecessary because there is only a slight improvement in the bias and the confidence level. 
\end{itemize}
\begin{figure}[ht]
  \begin{center}
    \includegraphics[height=10cm]{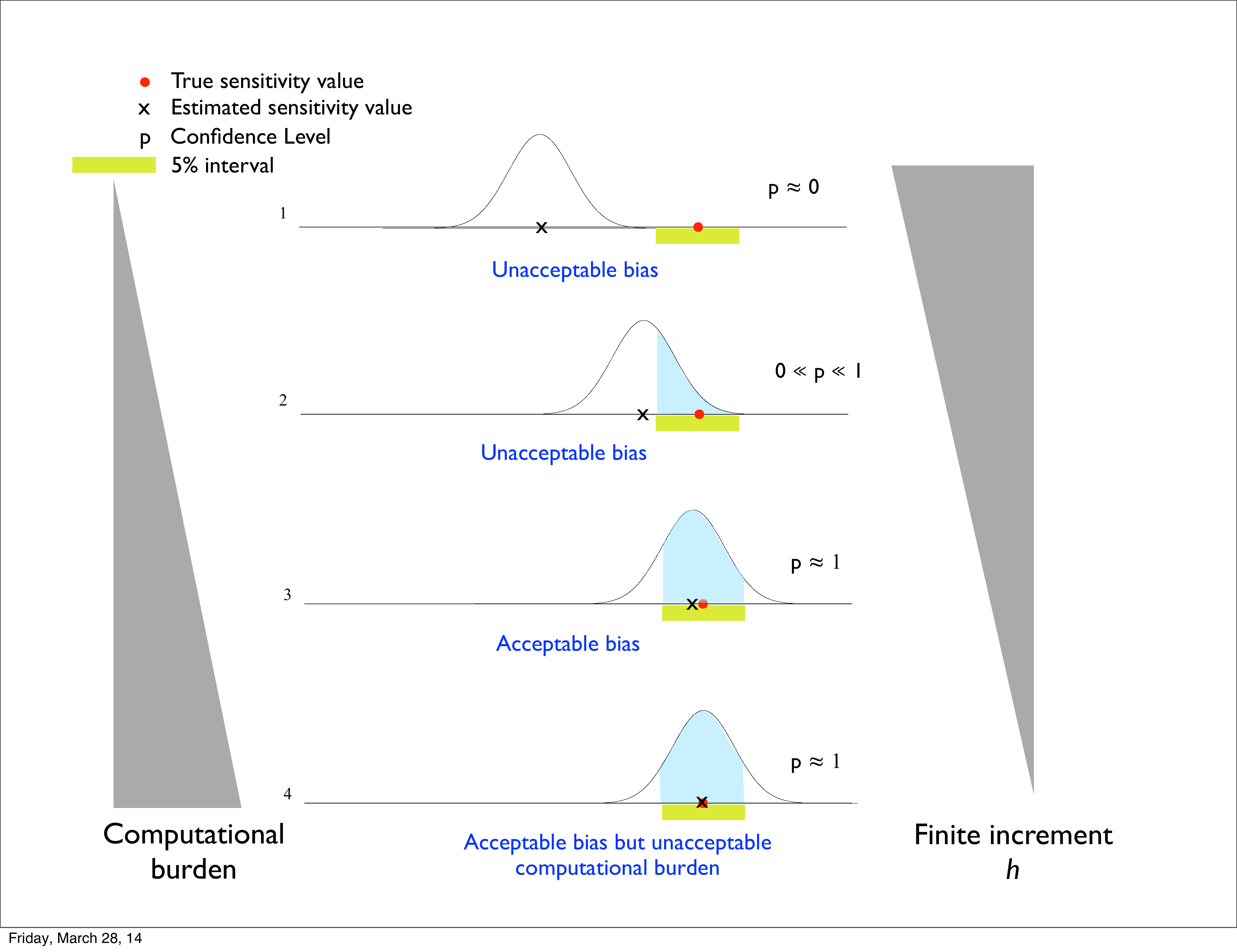}
  \caption{Trade-off between accuracy and computational burden for finite-difference schemes. The $4$ cases correspond to $4$ values of $h$ arranged in decreasing order. For Case 2, $0 \ll p \ll 1$ should be interpreted as ``$p$ is far away from $0$ and $1$".} \label{fig:tradeoff}
       \end{center}

\end{figure}

In order to use a finite-difference scheme efficiently one needs to use the largest value of $h$ for which an estimate with a high confidence level can be produced (Case 3 in Figure \ref{fig:tradeoff}). However, in general, it is impossible to determine this value of $h$. A blind choice of $h$ is likely to correspond to situations $1$ or $4$. To make matters worse, since the correct sensitivity value $S_\theta(f,T)$ is usually unknown one cannot measure the statistical accuracy of a finite-difference scheme, without re-estimating $S_\theta(f,T)$ using an unbiased method. Most practitioners who use finite-difference schemes, assume that the statistical accuracy is high if $h$ is small. However as our examples illustrate, an estimate can be inaccurate even for a $h$ that appears small. Therefore if high accuracy is needed for an application, then an unbiased method should be preferred over finite-difference schemes.

Since it is difficult to determine the \emph{optimum} value of $h$, we adopt the following strategy to gauge the efficiency of a finite-difference scheme. We start with $h=0.1$ and check if the threshold confidence level $p$ can be achieved for some sample size $N$. If it does, then we stop here, otherwise we decrease $h$ by a factor of $10$ (to $h=0.01$) and again perform the check. Continuing this way, we eventually arrive at a value of $h$ in the sequence $0.1,0.01,0.001\dots$, for which the threshold confidence level $p$ is achievable for a large enough sample size. For performance comparison, we \emph{only} use the last CPU time corresponding to the value of $h$ for which the check was successful. Note that we do not account for the time wasted in all the unsuccessful attempts. Therefore our strategy for performance comparison is more lenient towards the finite-difference schemes and more conservative towards the unbiased schemes. This leniency is intentionally adopted to compensate for the arbitrariness in choosing the ``test" values of $h$.

To compute the confidence level $p$ we need to know the exact sensitivity value $S_\theta(f,T) $. In some of our examples, $S_{\theta}(f,T)$ can be analytically evaluated and hence the computation of $\epsilon$ is quite straightforward. In other examples, where $S_{\theta}(f,T)$ cannot be explicitly computed, we use PPA to produce an estimate with a sample whose size is large enough to have very low sample variance. The rationale behind this is that since PPA is unbiased, if the sample variance is low then the estimate would be close to the true sensitivity value $S_\theta(f,T)$. 

We now start discussing the examples. In all the examples, the propensity functions $\lambda_k$ are in the form of mass-action kinetics unless stated otherwise. Recall that our method PPA depends on two parameters $N_0$ and $M_0$ whose choice is not very important (see Section \ref{sec:constrcstheta}). In this paper we always use PPA with $N_0=100$ and $M_0=10$. In all the examples we provide a bar chart with the CPU time required by each method to produce an estimate with the desired confidence level $p=0.95$ or $p=0.99$. This facilitates an easy performance comparison between various methods. The exact values of the CPU times, sample size $N$, estimator mean $\mu_N$, estimator standard derivation $\sigma_N$ and $h$ (for finite-difference schemes) are provided in Section \ref{sec:data} of \emph{Supplementary Materials}.

\begin{example}[Single-species birth-death model]
\label{ex:birth_death}
{ \rm 
Our first example is a simple birth-death model in which a single species $\mathcal{S}$ is created and destroyed according to the following two reactions:
\begin{align*}
\emptyset \stackrel{\theta_1 }{\rightarrow} \mathcal{S} \stackrel{\theta_2 }{\rightarrow} \emptyset.
\end{align*}
Let $\theta_1= \theta_2 = 0.1$ and assume that the sensitive parameter is $\theta = \theta_2$. Let $(X(t))_{t \geq 0}$ be the Markov process representing the reaction dynamics. Hence the population of $\mathcal{S}$ at time $t$ is given by $X(t) \in \N_0$. Assume that $X(0)=0$. For $f(x)=x$ we wish to estimate
\begin{align*}
S_\theta(f,T) = \frac{ \partial  }{  \partial \theta } \E\left( f( X(T) ) \right) = \frac{ \partial  }{  \partial \theta } \E\left(  X(T)  \right).
\end{align*}
for $T=20$ and $100$. Since the propensity functions of this network are affine, we can compute $S_\theta(f,T)$ exactly (see Section \ref{sec:data} of \emph{Supplementary Materials}). These exact values help in computing the confidence level of an estimate.

We estimate the sensitivity values with all the methods with two threshold confidence levels ($p=0.95$ and $p=0.99$), and plot the corresponding CPU times in Figure \ref{fig:bd}. For finite-difference schemes (CFD and CRP), the value of $h$ for which the desired confidence level was achieved is also shown.
\begin{figure}[h!]
  \begin{center}
    \includegraphics[height=10.0cm]{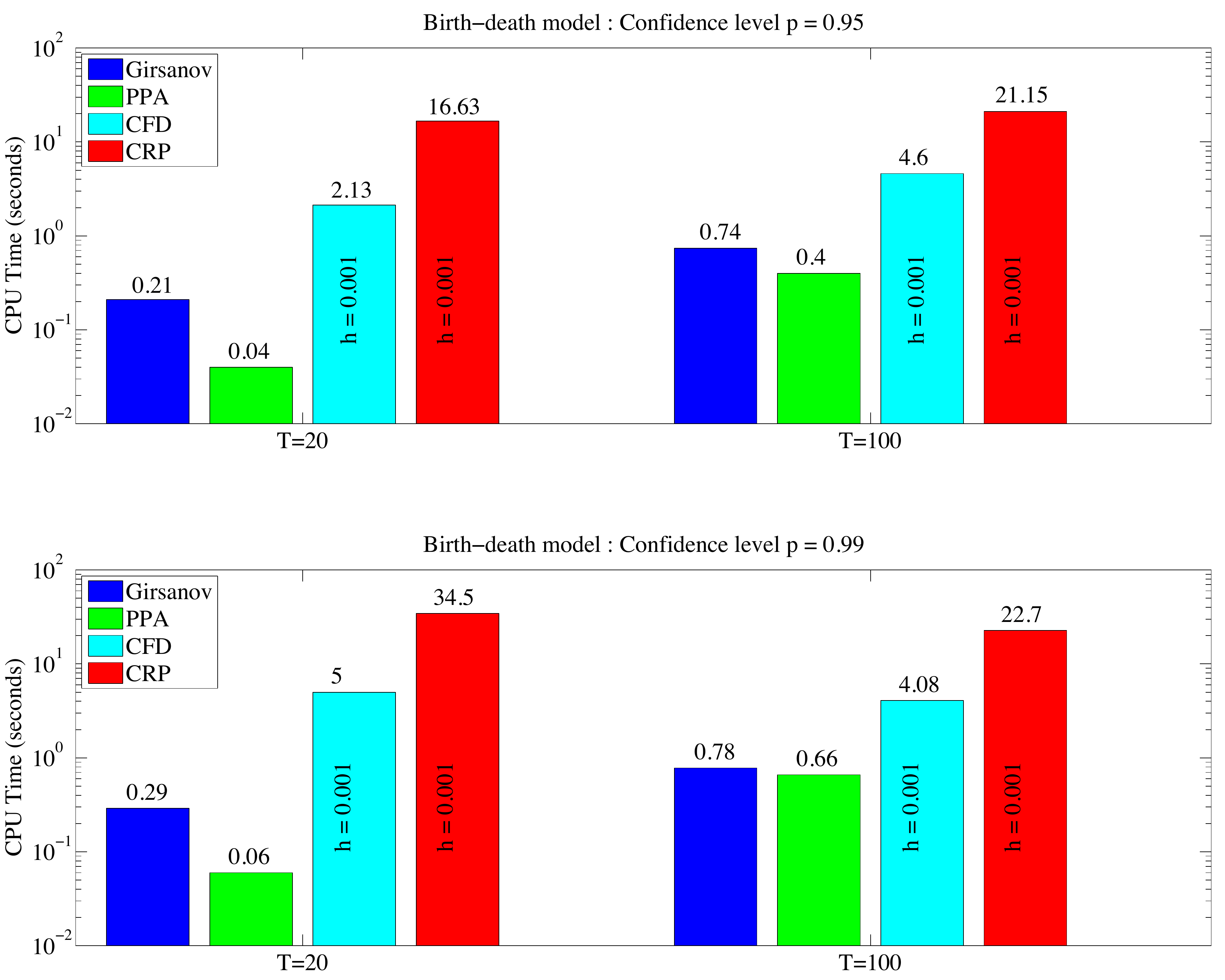}
  \end{center}
  \caption{Efficiency comparison for birth-death model with two threshold confidence levels : $p=0.95$ and $p=0.99$ and two observation times : $T=20$ and $T=100$. The exact CPU time is shown at the top of each bar. Note that efficiency is inversely related to the CPU time.}
\label{fig:bd}
\end{figure}

From Figure \ref{fig:bd}, it is immediate that for this example PPA generally performs better than the other methods. To measure this gain in performance more quantitatively, we compute the \emph{average speed-up factor} of PPA with respect to another method. For each threshold confidence level, this factor is calculated by simply taking the ratio of the aggregate CPU times required by a certain method and PPA, to perform all the sensitivity estimations. In this example, this aggregate involves the CPU times for $T=20$ and $T=100$. These average speed-up factors are presented in Table \ref{tb:speedup}. Note that in this example, PPA is significantly faster than the finite-difference schemes.

\begin{table}[h]
\caption{Birth-Death model - Average speed-up factors of PPA w.r.t other methods  } 
\label{tb:speedup} 
\begin{center}
\begin{tabular}{| c |c | c | c |}
\hline
 $p$ & Girsanov & CFD & CRP  \\ \hline 
  0.95 & 2.15 & 15.3& 85.9\\
  0.99 & 1.5 & 12.6 &  79.4\\ \hline
\end{tabular}
\end{center}
\end{table}

Finally we would like to use this example to demonstrate the pitfalls of choosing a ``wrong" value of $h$ for finite-difference schemes. We know from Figure \ref{fig:bd} that for $T-100$, the desired confidence level of $0.95$ was achieved with $h=0.001$. This ``right" value of $h$ corresponds to Case 2 in Figure \ref{fig:tradeoff}. If we select a higher value of $h$, such as $h=0.1$ or $h=0.01$, then with $10000$ samples we see that the confidence level $p$ is much lower than the desired value $0.95$ (see Table \ref{tb:pitfalls}), indicating that the estimate is not sufficiently accurate. Note that $h=0.1$ and $h=0.01$ correspond to cases $1$ and $2$ in Figure \ref{fig:tradeoff}. Observe that for $h=0.1$, the estimate produced by finite-difference schemes ($\mu_N$) are way off the exact sensitivity value of $-9.995$, which shows that the bias can be large for a $h$ that appears small. On the other hand, if we select a $h$ which is really low, such as $h=0.0001$, then we do produce an estimate with the desired confidence level of $0.95$ (see Table \ref{tb:pitfalls}). However the required sample size is very large and hence the computational burden is much higher than the ``right" value of $h$, which is $h=0.001$. This value of $h$ corresponds to Case 4 in Figure \ref{fig:tradeoff}.

\begin{table}[h]
\caption{Birth-Death model - Pitfalls of choosing a ``wrong" $h$ } 
\label{tb:pitfalls} 
\begin{center} 
\begin{tabular}{|c  | c | c | c | c | c |c|}
\hline
$h$ &  Method  & $N$ & Mean ($\mu_N$) & Std. Dev. ($\sigma_N$) & CPU time (s) & p \\ \hline 
\multirow{2}{*}{$0.1$}
 &  CFD & 10000 &-4.8705 & 0.0585  & 0.14 & 0 \\
 &CRP  & 10000 &-4.8865 & 0.0729 & 0.64 & 0  \\ \hline 
 
 \multirow{2}{*}{$0.01$}
 &  CFD & 10000  &-9.3509 & 0.291174 & 0.16 & 0.3100 \\
& CRP & 10000 &-9.2809 & 0.311796  & 0.67 & 0.2459   \\ \hline 
 
 \multirow{2}{*}{$0.0001$}
 &   CFD  & 2578774   &-10.1676 & 0.1985& 36.3 & 0.95 \\
 &CRP & 1968305 &-9.8714 & 0.2238 & 130.44 & 0.95 \\ \hline 

\end{tabular}
\end{center}
\end{table}

}
\end{example}

\begin{example}[Gene Expression Network]
\label{ex:geneex} {\rm
Our second example considers the model for gene transcription given in \cite{MO}. It has three species : Gene ($G$), mRNA ($M$) and protein ($P$), and there are four reactions given by
\begin{align*}
G \stackrel{\theta_1 }{\rightarrow} G + M , \  \  M \stackrel{ \theta_2 }{\rightarrow} M + P,  \  \ M \stackrel{ \theta_3 }{\rightarrow} \emptyset  \   \textrm{ and } P \stackrel{ \theta_4}{\rightarrow} \emptyset.
\end{align*}
The rate of translation of a gene into mRNA is $\theta_1$ while the rate of transcription of mRNA into protein is $\theta_2$. The degradation of mRNA and protein molecules occurs at rates $\theta_3$ and $\theta_4$ respectively. Typically $\theta_3 \gg \theta_4$ implying that a protein molecule lives much longer than a mRNA molecule. In accordance with the values given in \cite{MO} for \emph{lacA} gene in bacterium \emph{E.Coli}, we set  $\theta_1 = 0.6 \ \mathrm{min}^{-1}$, $\theta_2 = 1.7329 \ \mathrm{min}^{-1}$ and $\theta_3 = 0.3466 \ \mathrm{min}^{-1}$. Our sensitive parameter is $\theta = \theta_4$.

Let $(X(t))_{t \geq 0}$ be the $\N^2_0$-valued Markov process representing the reaction dynamics. For any time $t$, $X(t) = (X_{1}(t) , X_{2}(t))$, where $X_{1}(t)$ and $X_{2}(t)$ are the number of mRNA and protein molecules respectively. We assume that $(X_{1}(0) , X_{2}(0)) = (0,0)$ and define $f : \N^2_0 \to \R$ by $f(x_1,x_2) = x_2$. We would like to estimate
\begin{align}
\label{sens_example2}
S_\theta(f,T) = \frac{\partial  }{\partial \theta} \E \left( f(X(T)) \right) = \frac{\partial  }{\partial \theta} \E ( X_{2}(T) ),
\end{align}
which measures the sensitivity of the mean of the protein population at time $T$ with respect to the protein degradation rate. 

For sensitivity estimation, we consider two values of $T$ : $20 \ \mathrm{min}$ and $100 \ \mathrm{min}$, and three values of $\theta$:   $0.0693 \ \mathrm{min}^{-1}$, $0.0023 \ \mathrm{min}^{-1}$ and $0$. These values of $\theta$ correspond to the protein half-life of $10 \ \mathrm{min}$, $5 \ \mathrm{hr}$ and $\infty$. Like Example \ref{ex:birth_death}, this network also has affine propensity functions and hence we can compute $S_\theta(f,T)$ exactly (see Section \ref{sec:data} of \emph{Supplementary Materials}). These exact values help in computing the confidence level of an estimate.

The CPU times required by all the sensitivity estimation methods for two threshold confidence levels ($p=0.95$ and $p=0.99$) are presented in Figure \ref{fig:ge}. Note that for $\theta = 0$, the Girsanov method cannot be used (see Section \ref{subsec:unbsdschemes}) but other methods can still be used. From Figure \ref{fig:ge} it is immediate that PPA can be far more efficient than the Girsanov method. As in the previous example, we compute the average speed-up factor for PPA with respect to other methods. In this example, these factors are calculated by aggregating CPU times for the three values of $\theta$ and the two values of $T$. These average speed-up factors are presented in Table \ref{ge:speedup}. Observe that in this example PPA is significantly faster than the Girsanov method. Furthermore, unlike PPA, the performance of the Girsanov method deteriorates drastically as $\theta$ gets smaller : on average PPA is $71$ times faster for $\theta = 0.0693 \ \mathrm{min}^{-1}$ and $2781$ times faster for $\theta = 0.0023  \ \mathrm{min}^{-1}$. This deterioration in the performance of the Girsanov method is consistent with the results in \cite{Our}. 

Note that in this example PPA is only slightly faster than the finite-difference schemes. However unlike finite-difference schemes, the unbiasedness of PPA guarantees that one does not require additional checks to verify its accuracy. 

\begin{figure}[h!]
  \begin{center}
    \includegraphics[height=10.0cm,width=17cm]{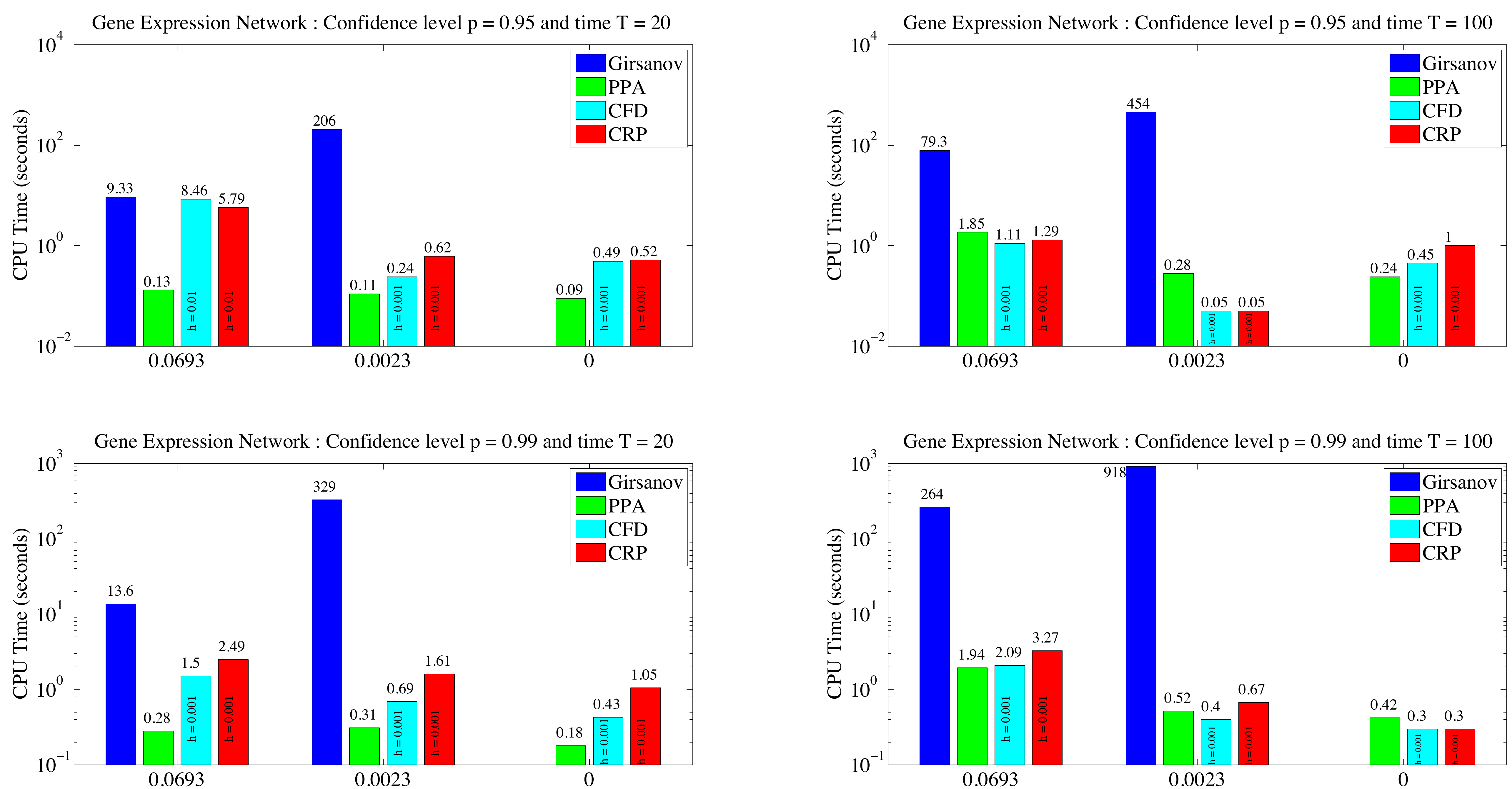}
  \end{center}
  \caption{Efficiency comparison for Gene Expression network with two threshold confidence levels : $p=0.95$ and $p=0.99$ and two observation times : $T=20$ min and $T=100$ min. Note that when the sensitive parameter $\theta$ is $0$, the Girsanov method is unusable and hence its CPU time is not displayed.}
\label{fig:ge}
\end{figure}

\begin{table}[h]
\caption{Gene Expression network - Average speed-up factors of PPA w.r.t other methods  } 
\label{ge:speedup}
\begin{center}
\begin{tabular}{| c |c | c | c |}
\hline
 $p$ & Girsanov & CFD & CRP  \\ \hline 
  0.95 & 315.9 & 4 & 3.4\\
  0.99 & 500 & 1.5 &  2.6\\ \hline
\end{tabular}
\end{center}
\end{table}
}
\end{example}

\begin{example}[Circadian Clock Network]
\label{ex:ccnet} {\rm
We now consider the network of a circadian clock introduced and studied by Vilar et. al. \cite{Vilar}. It is a large network with $9$ species $S_1,\dots,S_9$ and 16 reactions. These reactions along with the rate constants are given in Table \ref{tab:reaccc}.
\begin{table}[h]
\caption{Circadian Clock network  : List of reactions} 
\label{tab:reaccc} 
\centering
\begin{tabular}{|c|c|c|c|c|c|}
\hline
No. & Reaction & Rate Constant & No. & Reaction & Rate Constant \\ \hline
1 & $S_6 + S_2 \longrightarrow S_7 $  &  $\theta_1 = 1 $& 9 & $S_2 \longrightarrow \emptyset $ &  $\theta_9 = 100 $\\
2 & $S_7 \longrightarrow  S_6 + S_2 $ &  $\theta_2 = 50 $ &10 & $S_9 \longrightarrow S_9 + S_3  $&  $\theta_{10} = 0.01 $ \\
3 & $S_8 +S_2 \longrightarrow S_9 $&  $\theta_3 = 50 $ & 11 & $S_8\longrightarrow S_8 + S_3  $ &  $\theta_{11} = 50 $ \\
4 & $S_9 \longrightarrow   S_8 + S_2 $&  $\theta_4 = 500 $ & 12 & $ S_3 \longrightarrow \emptyset $ &  $\theta_{12} = 0.5 $ \\
5 & $S_7  \longrightarrow S_7+ S_1 $ &  $\theta_5 = 10 $ & 13 & $S_3 \longrightarrow S_3 + S_4  $  &  $\theta_{13} = 5 $\\
6 & $S_6  \longrightarrow S_6+ S_1  $ &  $\theta_6 = 50 $ & 14& $ S_4 \longrightarrow \emptyset $  &  $\theta_{14} = 0.2 $ \\
7 & $S_1 \longrightarrow \emptyset$ &  $\theta_7 = 1 $ & 15 & $S_2 + S_4 \longrightarrow S_5$&  $\theta_{15} = 20 $ \\
8 & $S_1 \longrightarrow S_1 + S_2 $ &  $\theta_8 = 1 $ & 16 & $S_5 \longrightarrow S_4 $ &  $\theta_{16} = 1 $ \\
\hline
\end{tabular}
\end{table} 
Let $(X(t))_{t \geq 0}$ be the $\N_0^d$-valued Markov process corresponding to the reaction dynamics with initial state $X(0) = (1,0,0,0,1,0,0,0,0)$.
 For $T = 5$ and $f(x) = x_4$ we wish to estimate
\begin{align*}
S_\theta(f,T) = \frac{\partial  }{\partial \theta} \E \left( f(X(T)) \right) = \frac{\partial  }{\partial \theta} \E ( X_4(T) ),
\end{align*} 
for $\theta = \theta_i$ and $i =5,6,8,12$ and $14$.

Due to the presence of many bimolecular interactions, exact values of the parameter sensitivity $S_\theta(f,T)$ are difficult to compute analytically, but they are required in computing the confidence levels of estimates. To solve this problem, we use PPA to produce sensitivity estimates with very low variance, and then use these approximate values in computing confidence levels. These approximate values are given in Section \ref{sec:data} of \emph{Supplementary Materials}.

The CPU times needed by all the sensitivity estimation methods for two threshold confidence levels ($p=0.95$ and $p=0.99$) are shown in Figure \ref{fig:cc}. 
As in the previous example, these results indicate that PPA has much better performance than the Girsanov method. 
In fact in many cases, the Girsanov method could not produce an estimate with the desired confidence level even with the maximum sample size of $10^7$. These values are marked with $*$ in Figure \ref{fig:cc} and the confidence level they correspond to is shown in parenthesis.  
\begin{figure}[h!]
  \begin{center}
    \includegraphics[height=10.0cm]{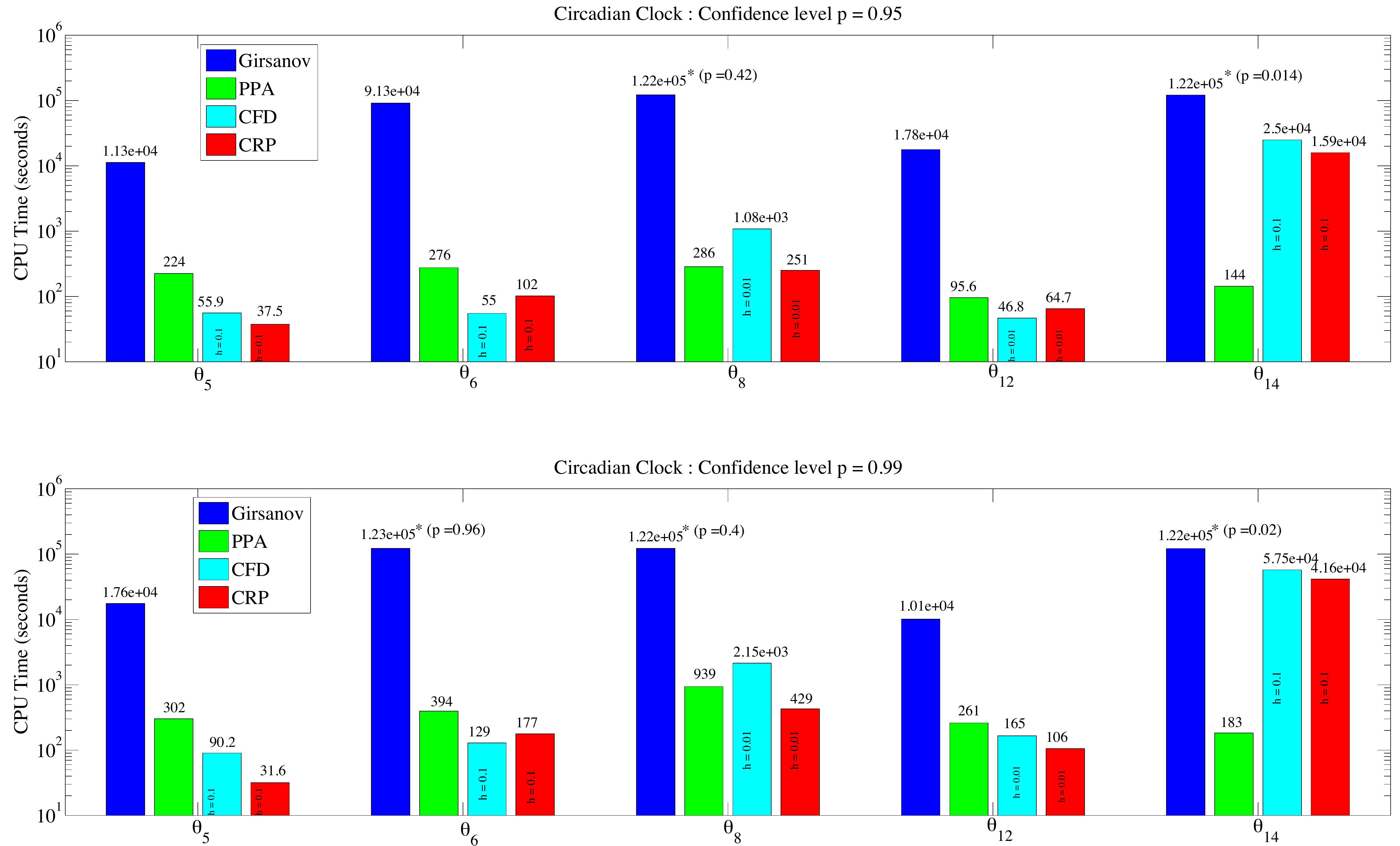}
  \end{center}
  \caption{Efficiency comparison for Circadian Clock network with two threshold confidence levels : $p=0.95$ and $p=0.99$ and observation time $T=5$. The values marked with $*$ correspond to cases where the Girsanov method failed to produce an estimate with the desired confidence level even with the maximum sample size of $10^7$. The final confidence level reached is shown in parenthesis.}
\label{fig:cc}
\end{figure}

If we compare the performance of PPA with finite-difference schemes, then we find that for some parameter values PPA is faster while for others it is slower. In the worst-case PPA is slower by a factor of $6$ but in the best-case PPA is faster by a factor of more than $300$. The overall efficiency of PPA can be compared from its average speed-up factor with respect to other methods. In this example, these factors are calculated by aggregating CPU times for the five values of $\theta$. These average speed-up factors are displayed in Table \ref{cc:speedup}. We mentioned before that in many cases the Girsanov method did not yield an estimate with the desired confidence level even with the maximum sample size. Hence the actual speed-up factor of PPA  with respect to the Girsanov method may be much higher than what is shown in Table \ref{cc:speedup}.
\begin{table}[h]
\caption{Circadian Clock network - Average speed-up factors of PPA w.r.t other methods  } 
\label{cc:speedup}
\begin{center}
\begin{tabular}{| c |c | c | c |}
\hline
 $p$ & Girsanov & CFD & CRP  \\ \hline 
  0.95 & 355 & 25 & 15.9\\
  0.99 & 190 & 28 &  20\\ \hline
\end{tabular}
\end{center}
\end{table}
}
\end{example}

\begin{example}[Genetic toggle switch]
\label{ex:tgsnet} { \rm
As our last example we look at a simple network with nonlinear propensity functions. Consider the network of a genetic toggle switch proposed by Gardner et. al. \cite{Gardner}. This network has two species $\mathcal{U}$ and $\mathcal{V}$ that interact through the following four reactions 
\begin{align*}
\emptyset \stackrel{\lambda_1 }{\rightarrow} \mathcal{U} , \  \  \mathcal{U} \stackrel{ \lambda_2 }{\rightarrow} \emptyset,  \  \ \emptyset \stackrel{\lambda_3 }{\rightarrow} \mathcal{V}  \   \textrm{ and } \mathcal{V}  \stackrel{ \lambda_4}{\rightarrow} \emptyset,
\end{align*}
where the propensity functions $\lambda_i$-s are given by
\begin{align*}
\lambda_1(x_1,x_2) = \frac{\alpha_1}{ 1 +x_2^{\beta} }, \ \ \lambda_2(x_1,x_2) = x_1 , \ \  \lambda_3(x_1,x_2) = \frac{\alpha_2}{ 1 +x_1^{\gamma} } \   \textrm{ and } \ \ \lambda_4(x_1,x_2) = x_2. 
\end{align*}
In the above expressions, $x_1$ and $x_2$ denote the number of molecules of $\mathcal{U}$ and $\mathcal{V}$ respectively. We set $\alpha_1 =50$, $\alpha_2 = 16$, $\beta = 2.5$ and $\gamma = 1$. 
Let $(X(t))_{t \geq 0}$ be the $\N^2_0$-valued Markov process representing the reaction dynamics with initial state $(X_{1}(0) , X_{2}(0)) = (0,0)$. For $T = 10$ and $f(x) = x_1$, our goal is to estimate
\begin{align*}
S_\theta(f,T) = \frac{\partial  }{\partial \theta} \E \left( f(X(T)) \right) = \frac{\partial  }{\partial \theta} \E ( X_1(T) ),
\end{align*} 
for $\theta = \alpha_1,\alpha_2,\beta$ and $\gamma$. In other words, we would like to measure the sensitivity of the mean of the number of $\mathcal{U}$ molecules at time $T=10$, with respect to all the model parameters.

Since the form of the propensity functions $\lambda_1$ and $\lambda_3$ is nonlinear, it is difficult to compute $S_\theta(f,T)$ exactly. As in the previous example, we obtain a close approximation of $S_{\theta}(f,T)$ using PPA. These approximate values are given in Section \ref{sec:data} of \emph{Supplementary Materials}, and they are used in computing confidence levels of estimates.

The CPU times needed by all the sensitivity estimation methods for two threshold confidence levels ($p=0.95$ and $p=0.99$) are presented in Figure \ref{fig:gts}. It can be easily seen that PPA is the best-performing method. 
\begin{figure}[h!]
  \begin{center}
    \includegraphics[height=10.0cm]{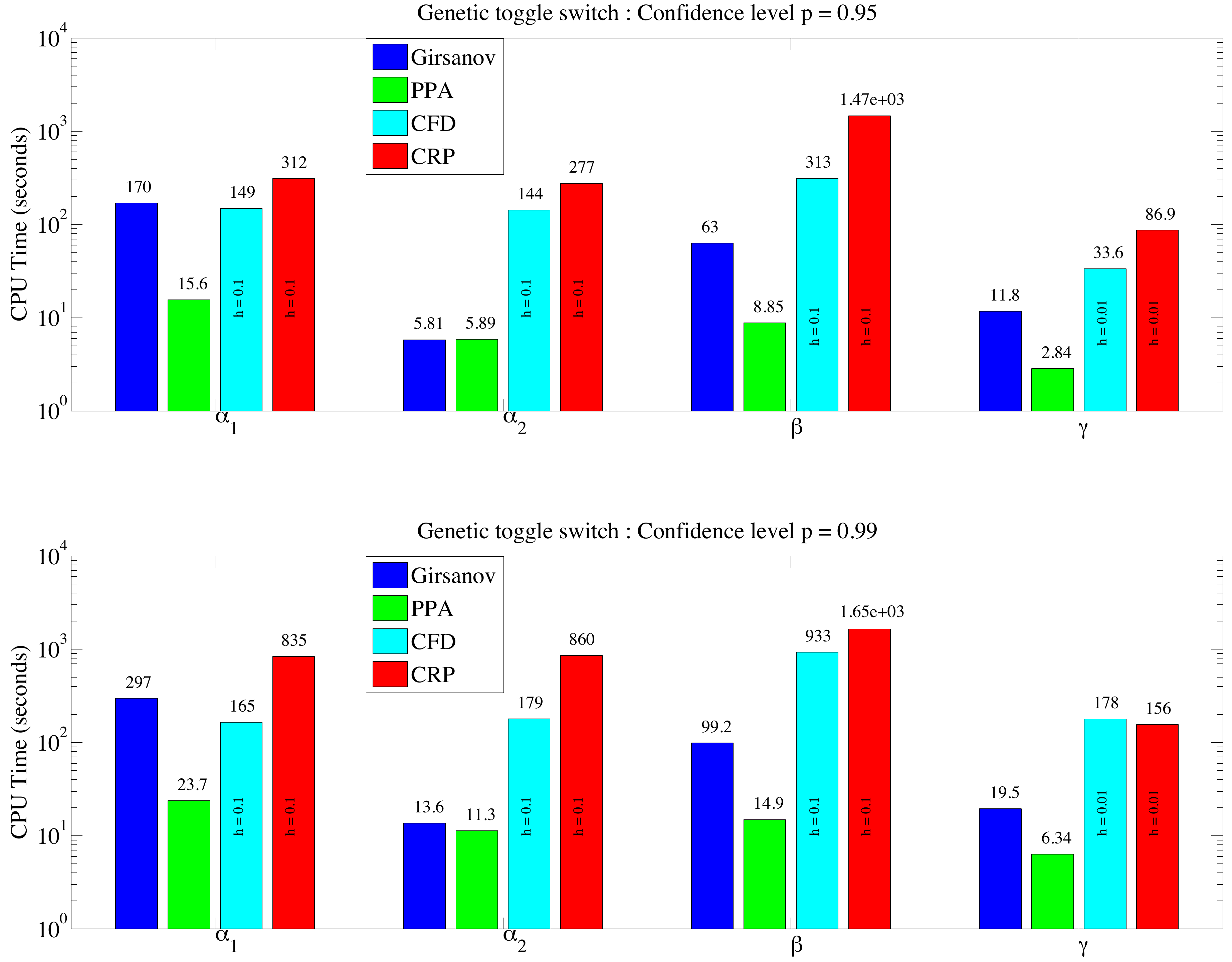}
  \end{center}
  \caption{Efficiency comparison for Genetic toggle switch with two threshold confidence levels : $p=0.95$ and $p=0.99$ and observation time $T=10$.}
\label{fig:gts}
\end{figure}

The average speed-up factors for PPA with respect to other methods are given in Table \ref{gts:speedup}. These factors are calculated by aggregating CPU times for the four values of $\theta$. Note that in this example, the unbiased schemes perform much better than the finite-difference schemes, possibly due to nonlinear parameter-dependence of the propensity functions.
\begin{table}[h]
\caption{Genetic toggle switch - Average speed-up factors of PPA w.r.t other methods  } 
\label{gts:speedup}
\begin{center}
\begin{tabular}{| c |c | c | c |}
\hline
 $p$ & Girsanov & CFD & CRP  \\ \hline 
  0.95 & 7.6 & 19.3 & 64 \\
  0.99 & 7.6 & 25.8 &  62.2\\ \hline
\end{tabular}
\end{center}
\end{table}
}
\end{example}

The examples presented in this section illustrate that in producing an estimate with a specified statistical accuracy, PPA can be much more efficient than other methods, both biased and unbiased. The finite-difference schemes may outperform PPA in some cases but the degree of outperformance is relatively small. Since it is difficult to independently verify the statistical accuracy of finite-difference schemes, PPA is an attractive alternative to these schemes, as its unbiasedness presents a theoretical guarantee for statistical accuracy.

 \section{Conclusions and Future Work} \label{sec:conc} 
 
 The aim of this paper is to provide a new unbiased method for estimating parameter sensitivity of stochastic reaction networks. Using a result from our recent paper \cite{Our}, we construct a random variable whose expectation is the required sensitivity value. We then present a simple procedure, called the Poisson Path Algorithm (PPA), to compute the realizations of this random variable. This gives us a way to generate samples for estimating the parameter sensitivity. Our method can be viewed as an improved version of the Auxiliary Path Algorithm
(APA), that we presented in \cite{Our}. Unlike APA, the proposed method is easy to implement, has low memory requirements and it works well for large networks and large observation times.

Through examples we compare the performance of PPA with other methods for sensitivity estimation, both biased and unbiased. Our results indicate that PPA easily outperforms the unbiased Girsanov method. Moreover in many cases it can be significantly faster that the best-performing finite-difference schemes (CRP and CFD) in producing a statistically accurate estimate. This makes PPA an appealing method for sensitivity estimation because it is computationally efficient and one does not have to tolerate a bias of an unknown size, that is introduced by finite-difference approximations.

In our method we simulate the paths of the underlying Markov process using Gillespie's Stochastic Simulation Algorithm (SSA) \cite{GP}. However SSA is very meticulous in the sense that it simulates the occurrence of each and every reaction in the dynamics. This can be very cumbersome for large networks with many reactions. To resolve this problem, a class of $\tau$-\emph{leaping} methods have been developed (see \cite{tleap1,tleap2}). These methods are approximate in nature but they significantly reduce the computational effort that is required to simulate the reaction dynamics. In future we would like to develop a version of PPA that uses a $\tau$-\emph{leaping} method instead of SSA and still produces an accurate estimate for the sensitivity values. Such a method would greatly simplify the sensitivity analysis of large networks.

\bibliographystyle{abbrv}



\newpage

\begin{center}
{ \LARGE \bf Supplementary Materials}
\end{center}

\renewcommand {\theequation}{A.\arabic{equation}}
\appendix
\setcounter{equation}{0}

\section{Proof of relation \eqref{expispreserved}} \label{asec:proof}

In this section we prove the main result on which our sensitivity estimation method is based.
\begin{proposition}
\label{prop:main}
Let $(X_\theta(t))_{t \geq 0}$ be the Markov process with initial state $x_0$, generator $\mathbb{A}_\theta$ and jump times $\sigma_0,\sigma_1,\dots$. Let $s_\theta(f,T)$ and $\hat{s}_\theta(f,T)$ be the random variables given by \eqref{expr:stheta} and \eqref{expr:sthetahat} respectively. Then \eqref{expispreserved} is satisfied for any $f: \N^d_0 \to \R$ and $T \geq 0$.
 \end{proposition}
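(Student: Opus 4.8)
The plan is to establish \eqref{expispreserved} by conditioning once on the whole trajectory of $X_\theta$. Let $\mathcal{F}=\sigma(X_\theta(t):0\le t\le T)$, so that $\mathcal{F}$ determines $\eta$, the jump times $\sigma_0,\dots,\sigma_\eta$, the visited states $X_\theta(\sigma_i)$, the increments $\Delta t_i$, the indicator $\alpha$, and the signs $\beta_{ki}$. Since $\Psi_\theta$, $D_\theta$ and $R_\theta$ are deterministic functions, the expression \eqref{expr:stheta} for $s_\theta(f,T)$ is $\mathcal{F}$-measurable; moreover, expanding the integral $\int_0^T\frac{\partial\lambda_k(X_\theta(t),\theta)}{\partial\theta}\Delta_{\zeta_k}f(X_\theta(t))\,dt$ over the constancy intervals $[\sigma_i,\sigma_{i+1})$ (and $[\sigma_\eta,T)$) lets me rewrite $s_\theta(f,T)=\sum_{k=1}^K\sum_{i=0}^{\eta}\frac{\partial\lambda_k(X_\theta(\sigma_i),\theta)}{\partial\theta}\bigl[\Delta_{\zeta_k}f(X_\theta(\sigma_i))\,\Delta t_i+R_\theta(X_\theta(\sigma_i),f,T-\sigma_i,k)\bigr]$. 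The target then becomes $\E(\hat{s}_\theta(f,T)\mid\mathcal{F})=s_\theta(f,T)$, after which \eqref{expispreserved} follows by taking expectations; Condition~2.1 in \cite{Our} ensures the integrability needed to apply the tower property and Fubini throughout.

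Next I would compute $\E(\hat{s}_\theta(f,T)\mid\mathcal{F})$ term by term, using that, given $\mathcal{F}$, the auxiliary randomizations $\gamma_i$, $\rho^{(c)}_{ki}$ and the coupled pair $(Z_1,Z_2)$ defining $\hat D_{ki}$ are mutually independent, with $\gamma_i$ exponential of rate $\lambda_0(X_\theta(\sigma_i),\theta)$, with $\rho^{(c)}_{ki}$ depending on $\mathcal{F}$ only through $X_\theta(\sigma_i)$, and with $\hat D_{ki}$ depending on $\gamma_i$ and fresh Poisson processes. Fix $i\le\eta-\alpha$ and $k$ with $\beta_{ki}\ne0$ (all other summands vanish on both sides, since $\partial\lambda_k/\partial\theta=0$ there and $\rho^{(c)}_{ki}$ is then undefined). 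Writing $\lambda_0=\lambda_0(X_\theta(\sigma_i),\theta)$, one has $\E(\Gamma_i\mid\mathcal{F})=1-e^{-\lambda_0(T-\sigma_i)}=\lambda_0\int_0^{T-\sigma_i}e^{-\lambda_0(T-\sigma_i-s)}\,ds$; also $\frac1c\beta_{ki}\E(\rho^{(c)}_{ki}\mid\mathcal{F})=\frac1{\lambda_0}\frac{\partial\lambda_k(X_\theta(\sigma_i),\theta)}{\partial\theta}$; and, factoring out $\rho^{(c)}_{ki}$ by independence and then conditioning on $\gamma_i$, \eqref{diff_estimation} together with the exponential density of $\gamma_i$ gives $\E(\Gamma_i\hat D_{ki}\mid\mathcal{F})=\int_0^{T-\sigma_i}D_\theta(X_\theta(\sigma_i),f,T-\sigma_i-u,k)\,\lambda_0 e^{-\lambda_0 u}\,du$. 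Substituting $s=T-\sigma_i-u$ in the last two displays, the $\hat D_{ki}$-term of \eqref{expr:sthetahat} contributes $\frac{\partial\lambda_k(X_\theta(\sigma_i),\theta)}{\partial\theta}\int_0^{T-\sigma_i}D_\theta(X_\theta(\sigma_i),f,s,k)e^{-\lambda_0(T-\sigma_i-s)}\,ds$ and the first bracketed term contributes $\frac{\partial\lambda_k(X_\theta(\sigma_i),\theta)}{\partial\theta}\bigl(\Delta_{\zeta_k}f(X_\theta(\sigma_i))\Delta t_i-\Delta_{\zeta_k}f(X_\theta(\sigma_i))\int_0^{T-\sigma_i}e^{-\lambda_0(T-\sigma_i-s)}\,ds\bigr)$; adding them and recognizing \eqref{defn_rtheta} recovers exactly the $i$-th summand of $s_\theta(f,T)$.

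Finally I would handle the absorbing boundary. If $\alpha=0$ the previous step already matches all $i=0,\dots,\eta$ and we are done. If $\alpha=1$, the indices $i\le\eta-1$ match as above and it remains to identify the $i=\eta$ summand of $s_\theta(f,T)$ with the conditional expectation of the $\alpha$-block of \eqref{expr:sthetahat}. Here $x:=X_\theta(\sigma_\eta)$ is absorbing, so $\lambda_0(x,\theta)=0$ and $\Psi_\theta(x,f,s)=f(x)$ for all $s\ge0$; hence \eqref{defn_rtheta} collapses to $R_\theta(x,f,T-\sigma_\eta,k)=\int_0^{T-\sigma_\eta}\bigl(\Psi_\theta(x+\zeta_k,f,s)-f(x+\zeta_k)\bigr)\,ds$, and combining with $\Delta_{\zeta_k}f(x)(T-\sigma_\eta)$ reduces the $i=\eta$ summand to $\sum_k\frac{\partial\lambda_k(x,\theta)}{\partial\theta}\bigl(\int_0^{T-\sigma_\eta}\Psi_\theta(x+\zeta_k,f,s)\,ds-(T-\sigma_\eta)f(x)\bigr)$, which by \eqref{int_estimation} is precisely $\E$ of the $\alpha$-block given $\mathcal{F}$. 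I expect the main obstacle to be bookkeeping rather than conceptual: keeping the conditional-independence structure of the three auxiliary randomizations straight so that the expectations factor as claimed, and carrying out the change of variables that turns the exponential cutoff $\gamma_i$ into the kernel $e^{-\lambda_0(x,\theta)(T-\sigma_i-s)}$ of $R_\theta$ — together with the minor care that summands with $\partial\lambda_k/\partial\theta=0$ drop out of both sides.
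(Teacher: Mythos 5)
Your proposal is correct and follows essentially the same route as the paper's proof: condition on the sigma-field generated by the main path, use the conditional independence of the auxiliary randomizations $\gamma_i$, $\rho^{(c)}_{ki}$ and the coupled pair to factor the expectations, identify the exponential kernel with $R_\theta$ from \eqref{defn_rtheta}, treat the absorbing block via \eqref{int_estimation}, and conclude by the tower property. The only cosmetic difference is that you expand the time integral in \eqref{expr:stheta} over constancy intervals at the outset rather than recombining at the end.
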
   
  \begin{proof}
  Let $\{\mathcal{F}_t\}_{t \geq 0}$ be the family of filtrations generated by $(X_\theta(t))_{t \geq 0}$. In other words, $\mathcal{F}_t$ is the sigma field contaning information about the process $X_\theta$ until time $t$. Note that the random variables $\eta$ and $\alpha$ are measurable with respect to $\mathcal{F}_T$. Moreover for every $i=1,\dots,\eta$, the random variables $\sigma_i, \Delta t_i, X_\theta(\sigma_i)$ and $\beta_{ki}$ are also $\mathcal{F}_T$-measurable.
  
Given $X_\theta(\sigma_i)$, the random variable $\gamma_i$ is independent of $\mathcal{F}_T$ and it is exponentially distributed with parameter 
$\lambda_0(X_\theta(\sigma_i) ,\theta)$. This implies that
\begin{align}
\label{propproof:exp0}
\E\left(  \frac{  \Gamma_i }{ \lambda_0( X_\theta (  \sigma_i  )  ,\theta ) }   \middle\vert \mathcal{F}_T \right) & = \frac{ 1}{ \lambda_0( X_\theta (  \sigma_i  )  ,\theta ) }  \P\left( \gamma_i < T - \sigma_i  \middle\vert \mathcal{F}_T  \right) = \int_{0}^{T - \sigma_i} e^{-\lambda_0(X_\theta(\sigma_i) ,\theta) s }ds,
\end{align}  
and from \eqref{diff_estimation} we obtain
\begin{align}
\label{propproof:exp1}
\E\left( \Gamma_i  \hat{D}_{ki} \middle\vert \mathcal{F}_T \right) & = \int_{0}^{T - \sigma_i} \lambda_0(X_\theta(\sigma_i) ,\theta) D_\theta( X_\theta(\sigma_i),f, T - \sigma_i- s,k ) e^{-\lambda_0(X_\theta(\sigma_i) ,\theta) s }ds.
 \end{align}
 Given $X_\theta(\sigma_i)$, $\rho^{(c)}_{ki} $ is a Poisson random variable independent of $\mathcal{F}_T$ and hence
 \begin{align}
 \label{propproof:exp2}
\E\left( \rho^{(c)}_{ki}  \middle\vert \mathcal{F}_T  \right) = \frac{c}{ \lambda_0( X_\theta(\sigma_i) ,\theta ) }  \left|  \frac{ \partial \lambda_k (X_\theta(\sigma_i) ,\theta ) }{ \partial \theta }  \right|.
\end{align}
Observe that $ \rho^{(c)}_{ki}  $ and $\Gamma_i  \hat{D}_{ki}$ are conditionally independent given $\mathcal{F}_T $.
Combining \eqref{propproof:exp1} and \eqref{propproof:exp2} we get
\begin{align}
\label{propproof:exp3}
\E\left( \frac{1}{c}  \beta_{ki}  \Gamma_i \rho^{(c)}_{ki} \hat{D}_{ki}  \middle\vert \mathcal{F}_T    \right)  & = \frac{\beta_{ki}  }{c} \E\left( \rho^{(c)}_{ki}  \middle\vert \mathcal{F}_T  \right)\E\left( \Gamma_i  \hat{D}_{ki} \middle\vert \mathcal{F}_T \right) \notag \\
 & =  \frac{\beta_{ki}  }{c} \left( \frac{c}{ \lambda_0( X_\theta(\sigma_i) ,\theta ) }  \left|  \frac{ \partial \lambda_k (X_\theta(\sigma_i) ,\theta ) }{ \partial \theta }  \right|  \right) \notag \\ & \ \  \left(  \int_{0}^{T - \sigma_i} \lambda_0(X_\theta(\sigma_i) ,\theta) D_\theta( X_\theta(\sigma_i),f, T - \sigma_i- s,k ) e^{-\lambda_0(X_\theta(\sigma_i) ,\theta) s }ds\right) \notag\\
 & =  \frac{ \partial \lambda_k (X_\theta(\sigma_i) ,\theta ) }{ \partial \theta } \int_{0}^{T - \sigma_i}  D_\theta( X_\theta(\sigma_i),f, T - \sigma_i- s,k ) e^{-\lambda_0(X_\theta(\sigma_i) ,\theta) s }ds \notag \\
  & =  \frac{ \partial \lambda_k (X_\theta(\sigma_i) ,\theta ) }{ \partial \theta } R_\theta(  X_\theta(\sigma_i),f, T - \sigma_i,k  ) \notag \\
  & + \frac{ \partial \lambda_k (X_\theta(\sigma_i) ,\theta ) }{ \partial \theta } \Delta_{\zeta_k} f( X_\theta(\sigma_i) ) \int_{0}^{T - \sigma_i} e^{-\lambda_0(X_\theta(\sigma_i) ,\theta) s }ds,
\end{align}
where $ R_\theta$ is defined by \eqref{defn_rtheta}.

On the event $\{ \alpha =1\}$, the state $X_\theta(\sigma_\eta)$ is absorbing ($\lambda_0( X_\theta(\sigma_\eta) ,\theta) = 0$). Hence $\Psi_\theta( X_\theta(\sigma_\eta) , f, s) = f( X_\theta(\sigma_\eta) )$ for all $s \geq 0$. From \eqref{int_estimation} we get
\begin{align}
\label{propproof:exp4}
 \E\left( \hat{I}_{ k \eta } \middle\vert \mathcal{F}_T   \right)  \notag & =   \int_{0}^{T - \sigma_\eta} \Psi_\theta( X_\theta(\sigma_\eta) +\zeta_k, f, s)ds \notag \\
& =  \int_{0}^{T - \sigma_\eta} D_\theta( X_\theta(\sigma_\eta),f,T - \sigma_\eta -s, k)ds +  (T - \sigma_\eta) f ( X_\theta (  \sigma_\eta  ) )  \notag \\
& = R_\theta(  X_\theta(\sigma_\eta),f, T - \sigma_\eta,k  ) 
+ \Delta_{\zeta_k} f( X_\theta(\sigma_\eta) )  \Delta t_{\eta} +  (T - \sigma_\eta) f ( X_\theta (  \sigma_\eta  ) ). 
\end{align}
 Taking conditional expectations in \eqref{expr:sthetahat} yields
\begin{align*}
&\E\left( \hat{s}_\theta(f,T) \middle\vert \mathcal{F}_T \right) = \sum_{k=1}^K \sum_{i=0}^{\eta - \alpha } \left[   \frac{ \partial \lambda_k ( X_\theta (  \sigma_i  ) , \theta ) }{ \partial \theta   } \Delta_{\zeta_k} f(X_\theta(  \sigma_i  )) \left( \Delta t_i - \E\left( \frac{  \Gamma_i }{ \lambda_0( X_\theta (  \sigma_i  )  ,\theta ) }  \middle\vert \mathcal{F}_T \right)  \right) \right. \\ & \left. + \E \left( \frac{1}{c}  \beta_{ki}  \Gamma_i \rho^{(c)}_{ki} \hat{D}_{ki}  \middle\vert \mathcal{F}_T \right) \right]  + \alpha \sum_{k=1}^K  \left[   \frac{ \partial \lambda_k ( X_\theta (  \sigma_\eta  ) , \theta ) }{ \partial \theta   } \left( \E \left( \hat{I}_{ k \eta } \middle\vert \mathcal{F}_T \right)   - (T - \sigma_\eta) f ( X_\theta (  \sigma_\eta  ) ) \right)\right]  .  \notag
\end{align*}
Using \eqref{propproof:exp0}, \eqref{propproof:exp3} and \eqref{propproof:exp4} we obtain
\begin{align*}
& \E\left( \hat{s}_\theta(f,T) \middle\vert \mathcal{F}_T \right) \\
& = \sum_{k=1}^K \sum_{i=0}^{\eta - \alpha } \left[   \frac{ \partial \lambda_k ( X_\theta (  \sigma_i  ) , \theta ) }{ \partial \theta   } \Delta_{\zeta_k} f(X_\theta(  \sigma_i  )) \left( \Delta t_i - \int_{0}^{T - \sigma_i} e^{-\lambda_0(X_\theta(\sigma_i) ,\theta) s }ds  \right) \right. \\ & \left. + 
\frac{ \partial \lambda_k (X_\theta(\sigma_i) ,\theta ) }{ \partial \theta } R_\theta(  X_\theta(\sigma_i),f, T - \sigma_i,k  ) + \frac{ \partial \lambda_k (X_\theta(\sigma_i) ,\theta ) }{ \partial \theta } \Delta_{\zeta_k} f( X_\theta(\sigma_i) ) \int_{0}^{T - \sigma_i} e^{-\lambda_0(X_\theta(\sigma_i) ,\theta) s }ds \right]  
\\& + \alpha \sum_{k=1}^K  \left[   \frac{ \partial \lambda_k ( X_\theta (  \sigma_\eta  ) , \theta ) }{ \partial \theta   } \left(  R_\theta(  X_\theta(\sigma_\eta),f, T - \sigma_i,k  ) 
+ \Delta_{\zeta_k} f( X_\theta(\sigma_i) )  \Delta t_{\eta} \right)\right] \\
& = \sum_{k=1}^K \sum_{i=0}^{\eta } \left[   \frac{ \partial \lambda_k ( X_\theta (  \sigma_i  ) , \theta ) }{ \partial \theta   } \Delta_{\zeta_k} f(X_\theta(  \sigma_i  ))  \Delta t_i + \frac{ \partial \lambda_k (X_\theta(\sigma_i) ,\theta ) }{ \partial \theta } R_\theta(  X_\theta(\sigma_i),f, T - \sigma_i,k  )  \right] \\
& =  \sum_{k = 1}^K  \left( \int_{0}^T \frac{ \partial \lambda_k ( X_\theta (t) , \theta ) }{ \partial \theta   } \Delta_{\zeta_k} f(X_\theta(t)) dt
+ \sum_{  i = 0 : \sigma_i < T   }^{\infty}  \frac{  \partial  \lambda_k ( X_\theta(\sigma_{i}) ,\theta ) }{ \partial \theta}  R_{\theta}( X_\theta( \sigma_i) ,f, T -\sigma_i ,k) \right).
\end{align*}
The last expression is equal to $s_\theta(f,T)$ (see \eqref{expr:stheta}) and hence
\begin{align*}
\E\left(  \hat{s}_\theta(f,T)  \middle\vert \mathcal{F}_T \right) = s_\theta(f,T).
\end{align*}
Taking expectations on both sides proves this proposition.
  \end{proof}

\section{Methods used in PPA} \label{app:methods}

In this section we provide descriptions for certain methods used in PPA.

\begin{algorithm}[h]                     
\caption{Computes the next time increment ($\Delta t$) and reaction index $(k)$ for Gillespie's SSA}
\label{SSA}  
\begin{algorithmic}[1]
\Function{SSA}{$x$} 
\State Set $r_1 = \mathrm{rand}()$ , $r_2 = \mathrm{rand}()$ and $k = 0$
\If {$\lambda_0(x,\theta) > 0$}
	\State Calculate $\Delta t = -\log(r_1)/\lambda_0(x,\theta)$
	\State Set $s = 0$
	\While {$s < r_2$}
		\State Update $k \gets k + 1$
		\State Update $s \gets s + \lambda_k(x,\theta)/ \lambda_0(x,\theta) $
	\EndWhile
\Else
	\State $\Delta t = \infty$
\EndIf 
\State \Return $(\Delta t, k)$
\EndFunction
\end{algorithmic}
\end{algorithm}

\begin{algorithm}[h]                       
\caption{Computes an estimate of  $\E\left(  R_{\textnormal{tot}} \right)$ using $N_0$ simulations of the process $X_\theta$}
\label{estimatenormalization}  
\begin{algorithmic}[1]
\Function{Estimate-R-total}{$x_0,T$}    
\State Set $R = 0$
\For { $i = 1$ to $N_0$}
	\State Set $x = x_0$ and $t = 0$
	\While {$t < T$}
		\State Calculate $(\Delta t, k_0)$ = SSA$(x)$
		\If {$\lambda_0(x,\theta) >0$}
		\For { $k = 1$ to $K$}
			\State Update $R \gets R + \frac{1}{ \lambda_0(x,\theta) }\left| \frac{ \partial \lambda_k(x,\theta) }{ \partial \theta} \right|$
		\EndFor
		\EndIf
		\State Update $t \gets t +\Delta t$ and $x\gets x+ \zeta_{k_0}$ 
	\EndWhile
\EndFor 
\State \Return $ R/N_0 $
\EndFunction
\end{algorithmic}
\end{algorithm}

\begin{algorithm}[h]
\caption{ Used to evaluate $\hat{I}_{k\eta}$ given by \eqref{evaluateiketa} }                           
\label{evaluateintegralindependent}  
\begin{algorithmic}[1]
\Function{EvaluateIntegral}{$x,T_f$}     
\State Set $t = 0$ and $I = 0$
\While {$ t <  T_f $}
	\State Calculate $(\Delta t, k)$ = SSA$(x)$
	\State Update $\Delta t  \gets \min\{\Delta t, T_f -t\}$
	\State Update $I \gets I+ f(x)  \Delta t$
	\State Update $t \gets t + \Delta t$ and $x \gets x + \zeta_k$
\EndWhile 
\State \Return $I$
\EndFunction
\end{algorithmic}
\end{algorithm}  
\begin{algorithm}                     
\caption{ Generates a Poisson random variable with parameter $r$}     
\label{genpoissrv}  
\begin{algorithmic}[1]
\Function{GeneratePoisson}{$r$}       
\State Set $p = \exp(-r)$, $s = p$, $n = 0$ and $ u = rand()$
\While {$ u >  s$}
	\State Update $n \gets n + 1$, $p \gets \left( \frac{p r}{n} \right)$ and $s \gets s + p$
\EndWhile 
\State \Return $n$
\EndFunction
\end{algorithmic}
\end{algorithm}

\begin{algorithm}[h]  
\caption{ Used to evaluate $\hat{D}_{ki}$ given by \eqref{evaluatedki} }           
 \label{gendiffsample}
\begin{algorithmic}[1]
\Function{EvaluateCoupledDifference}{$x_1,x_2,T_f$}    
\State Set $t = 0$
\For {$k=1$ to $K$}
		\State Set $T_{ki} = 0$ and $P_{ki} = - \log(rand())$ for $i=1,2,3$
\EndFor

\While { $x_1 \neq x_2$ \textbf{ AND }  $t < T_f$  } 
	\For {$k=1$ to $K$}
		\State Set $A_{k1} = \lambda_0(x_1,\theta) \wedge \lambda_0(x_2,\theta)$, $A_{k2} = \lambda_0(x_1,\theta) - A_{k1} $ and $A_{k3} = \lambda_0(x_2,\theta) - A_{k1} $
		\State Set $\delta_{ki} = \left( \frac{P_{ki} - T_{ki} }{  A_{ki} } \right)$ for $i=1,2,3$
	\EndFor
	\State Set $\Delta t = \min_{k,i} \{\delta_{ki} \}$ and $(k_{m} , i_{m}) = \argmin_{k,i} \{\delta_{ki}\}$
	\State Update $t \gets t + \Delta t $
	\If {$t <  T_f$}

	\If {$i_{m} = 1$ \textbf{ OR } $i_{m} = 2$ }
		\State Update $x_1 \gets x_1 + \zeta_{k_m }$ 
	\EndIf 
	\If {$i_{m} = 1$ \textbf{ OR } $i_{m} = 3$ }
		\State Update $x_2 \gets x_2 + \zeta_{k_m  }$ 
	\EndIf 

	\For {$k=1$ to $K$}
		\State Update $T_{ki} \gets T_{ki}  +  A_{ki} \Delta t$ for $i=1,2,3$ 
	\EndFor
	\State Update $P_{k_m i_m} \gets P_{k_m i_m}  - \log(rand())$
		\EndIf
\EndWhile
\State \Return $f(x_1)-f(x_2)$
\EndFunction
\end{algorithmic}
\end{algorithm}

\FloatBarrier

\section{Data for numerical examples} \label{sec:data}

In this section we provide all the data that was generated to compare the performance of various sensitivity estimation methods in Section \ref{sec:ex}. In the following tables, an estimate for parameter sensitivity produced by a method is described by the following quantities: the sample size $N$, the estimator mean $\mu_N$, the estimator variance $\sigma_N$ and the CPU time (in seconds). Here $N$ is the minimum number of samples needed to attain the threshold confidence level of $p=0.95$ or $p=0.99$. For finite-difference schemes (CFD and CRP), the corresponding value of $h$ is also shown. This value of $h$ is the largest in the sequence $10^{-1},10^{-2},10^{-3},\dots$ for which the desired confidence level was achieved.

\subsection{ Birth-death Network }

\begin{center}
{\bf Exact values of $S_\theta(f,T)$}

\begin{tabular}{|c | c | c |}
\hline
T & 20 & 100  \\ \hline 
$S_\theta(f,T)$ & -5.9399 & -9.995  \\ \hline
\end{tabular}
\end{center}

\begin{center}
{\bf Efficiency comparison for confidence level $p=0.95$}

\begin{tabular}{| c |c | c | c | c |c| c | }
\hline
 $T$ & Method & N  & Mean ($\mu_N$) & Std. Dev. ($\sigma_N$) & CPU time (s) & h \\ \hline 
   \multirow{4}{*}{20} & Girsanov & 24474 &-5.8508 & 0.1247 &  0.21 & --- \\
&PPA & 3119&-5.9753 & 0.1472  & 0.04 & --- \\ 
& CFD & 266016  &-5.9658 & 0.1493 & 2.13 & 0.001 \\
&CRP & 283127 &-5.8949 & 0.1444  & 16.83 & 0.001    \\ \hline

\multirow{4}{*}{100} &Girsanov & 67723 &-9.8111 & 0.1918 & 0.74 & ---  \\
 &PPA & 13567&-10.1301 & 0.2188  & 0.4 & --- \\ 
 &CFD& 325365 &-9.7798 & 0.1725  & 4.6 & 0.001 \\
 &CRP & 335946 &-9.7784 & 0.1706  & 21.15  & 0.001  \\ \hline

\end{tabular}
\end{center}

\begin{center}
{\bf Efficiency comparison for confidence level $p=0.99$}

\begin{tabular}{| c |c | c | c | c |c| c | }
\hline
 $T$ & Method & N  & Mean ($\mu_N$) & Std. Dev. ($\sigma_N$) & CPU time (s) & h \\ \hline 
   \multirow{4}{*}{20} &  Girsanov & 30645 &-5.9449 & 0.1152  & 0.29 & --- \\
&PPA & 5495 &-5.9712 & 0.1110  & 0.06 &  --- \\ 
 & CFD & 577229 &-5.8781 & 0.1006  & 5 & 0.001 \\
 &CRP& 474599 &-5.9103 & 0.1115  & 34.49 & 0.001 \\ \hline

\multirow{4}{*}{100} &Girsanov & 67627 &-9.9571 & 0.1903  & 0.78 & ---  \\
  &PPA& 19207 &-10.0568 & 0.1843  & 0.66 & --- \\ 
 &  CFD & 291565 &-10.0561 & 0.1848  & 4.08 & 0.001\\
  &CRP & 306807 &-10.0682 & 0.1810  & 22.7  & 0.001   \\ \hline

\end{tabular}
\end{center} 

\subsection{Gene Expression Network}

\begin{center} 
{\bf  Exact values of $S_\theta(f,T)$} 

\begin{tabular}{|c | c | c |}
\hline
 $\theta$ & T & $S_\theta(f,t)$  \\ \hline
 \multirow{2}{*}{$0.0693 $} & 20 & -207.544  \\ 
&  100 &  -618.776 \\ \hline

\multirow{2}{*}{$0.0023 $} & 20 & -439.601  \\ 
&  100 &  -12213.9 \\ \hline

\multirow{2}{*}{$0 $} & 20 & -451.812  \\ 
&  100 & -14158.6 \\ \hline  
\end{tabular}
\end{center}

\begin{center} 
{\bf Efficiency comparison for confidence level $p=0.95$} 
\begin{tabular}{|c | c | c | c | c | c | c |c |}
\hline
$\theta$ & T &  Method  & N  & Mean ($\mu_N$) & Std. Dev. ($\sigma_N$) & CPU time (s) & h \\ \hline 
\multirow{8}{*}{$0.0693 $}
& \multirow{4}{*}{20 }   &Girsanov & 346580 &-210.872 & 4.2674  & 9.33 & --- \\
 & &PPA & 563 &-207.406 & 5.2869  & 0.13 & ---  \\  
 & & CFD & 148730 &-197.711 & 0.3308  & 8.46  & 0.01  \\
 & & CRP & 39054  &-198.546 & 0.8375 & 5.79   & 0.01 \\  \cline{2-8}

& \multirow{4}{*}{100} &  Girsanov & 511629 &-623.925 & 14.9357  & 79.28 & --- \\
&  & PPA& 2259  &-606.18 & 10.9611 & 1.85 & --- \\
&  & CFD & 3297  &-602.062 & 8.6224 & 1.11  & 0.001\\
&  & CRP& 3227  &-611.094 & 13.7036  & 1.29  & 0.001 \\   \hline \hline

\multirow{8}{*}{$0.0023 $}
& \multirow{4}{*}{20 }   & Girsanov & 8923245 &-432.886 & 9.2280  & 206 &  --- \\
 & & PPA & 592 &-438.32 & 11.0762  & 0.11 &  --- \\
 & & CFD & 4972  &-445.093 & 9.85& 0.24 & 0.001 \\
 & & CRP & 3997&-440.08 & 11.2038  & 0.62  & 0.001  \\  \cline{2-8} 
 
& \multirow{4}{*}{100} &  Girsanov & 4473418 &-12186.3 & 310.358  & 454.49 &  ---\\
&  & PPA & 240 &-12150.2 & 304.914  & 0.28 &  --- \\   
&  & CFD& 241   &-12074.7 & 278.079 & 0.05 & 0.001 \\
& & CRP & 205 &-12063.4 & 272.183  & 0.05 & 0.001 \\   \hline \hline

\multirow{6}{*}{$0$}
& \multirow{3}{*}{20  }   & PPA  & 474 &-451.881 & 11.5112 & 0.09 &  --- \\
 & &CFD& 10381 &-440.42 & 6.7753  & 0.49 & 0.001 \\
 & &CRP& 3776 &-451.006 & 11.481  & 0.52  & 0.001\\  \cline{2-8} 
 
& \multirow{3}{*}{100}   & PPA & 252 &-14018.1 & 322.54  & 0.24 &  ---\\  
 & & CFD& 1912  &-13626.6 & 106.829  & 0.45 & 0.001 \\
&  &CRP & 3431&-13582 & 79.3105  & 1 & 0.001 \\   \hline \hline

\end{tabular}
\end{center} 

\begin{center}
{\bf Efficiency comparison for confidence level $p=0.99$} 
 
\begin{tabular}{|c | c | c | c | c | c | c |c |}
\hline
$\theta$ & T &  Method  & N  & Mean ($\mu_N$) & Std. Dev. ($\sigma_N$) & CPU time (s) & h \\ \hline 
\multirow{8}{*}{$0.0693 $}
& \multirow{4}{*}{20 }   & Girsanov& 472152   &-209.302 & 3.67417 & 13.64 &---  \\
 & &PPA  & 971&-207.548 & 4.0278 & 0.28 & --- \\
 &  &CFD  & 26041 &-202.988 & 2.4926 & 1.5 & 0.001 \\
 & &CRP & 15712 &-209.203 & 3.7109  & 2.49  & 0.001\\  \cline{2-8}

& \multirow{4}{*}{100} & Girsanov & 1665299  &-607.096 & 8.2767 & 263.76 & ---  \\
&  &PPA & 2284 &-613.211 & 10.7988  & 1.94 & --- \\   
&  & CFD& 6006  &-602.897 & 6.4532 & 2.09 & 0.001\\
&  & CRP & 7888 &-628.676 & 9.0198 & 3.27 & 0.001 \\   \hline \hline

\multirow{8}{*}{$0.0023 $}
& \multirow{4}{*}{20 }   & Girsanov & 13874198  &-444.325 & 7.3984 & 329 & ---   \\
 & &PPA& 1353  &-445.363 & 6.9574  & 0.31 & --- \\ 
 &  & CFD & 14448  &-448.297 & 5.7024 & 0.69 & 0.001\\
 & &CRP  & 10951 &-432.928 & 6.5652 & 1.61  & 0.001 \\  \cline{2-8} 
 
& \multirow{4}{*}{100} &   Girsanov& 8777267 &-12302.1 & 221.582  & 917.69 & ---\\
&  &PPA & 490  &-12146.3 & 224.752 & 0.52 & --- \\   
&  & CFD & 1450 &-11845.5 & 103.974  & 0.4  & 0.001 \\
&  & CRP & 1359 &-11863.1 & 111.482 & 0.67& 0.001 \\   \hline \hline

\multirow{6}{*}{$0$}
& \multirow{3}{*}{20  }   & PPA & 871 &-451.562 & 8.7611  & 0.18 & --- \\
 & &CFD & 8413  &-456.674 & 7.5868 & 0.43  & 0.001 \\
 & &CRP & 6770  &-449.483 & 8.3893 & 1.05  & 0.001\\  \cline{2-8} 
 
& \multirow{3}{*}{100}   & PPA& 424 &-14195.7 & 270.698  & 0.42 & --- \\  
 & & CFD& 924  &-13810.6 & 153.342 & 0.3  & 0.001 \\
&  &CRP & 953&-13804.8 & 150.552  & 0.3 & 0.001 \\   \hline \hline

\end{tabular}
\end{center}

\newpage

\subsection{Circadian clock network }

\begin{center} 
{\bf Approximate values of $S_\theta(f,T)$} 

\begin{tabular}{|c |c| c | c | c | c |}
\hline
 $\theta  $ &  $\theta_5 =10$  & $\theta_6 = 50$ & $\theta_8 = 1$ & $\theta_{12} = 0.5$ & $\theta_{14} = 0.2$  \\  \hline
$S_{\theta}(f,T)$ &  $-240.368$  & $47.0746  $ & $ -127.629 $ & $ 1469.81 $ & $0.1424 $  \\ \hline
\end{tabular}
\end{center}

\begin{center}
{\bf Efficiency comparison for confidence level $p=0.95$} 

\begin{tabular}{|c  | c | c | c | c | c |c |}
\hline
$\theta$ &  Method & N & Mean ($\mu_N$) & Std. Dev. ($\sigma_N$)  & CPU time (s) & h \\ \hline 

\multirow{4}{*}{$\theta_5 =10$} & Girsanov& 932283 &-237.142 & 5.2824  & 11292.9 & ---  \\
 & PPA & 1266 &-238.7 & 5.8722 & 224.38 & --- \\ 
 & CFD& 1028 &-234.193 & 3.5001  & 55.85  & 0.1  \\
 &CRP & 837  &-243.943 & 5.019 & 37.51 & 0.1 \\ \hline 
 
 \multirow{4}{*}{$\theta_6 =50$} & Girsanov & 7535847  &46.1122 & 0.8456 & 91253.5 & --- \\
 & PPA & 2312  &47.1316 & 1.199 & 276.15 & ---\\ 
 & CFD & 1034 &47.3501 & 1.1664  & 55.04 & 0.1 \\
  & CRP  & 3019  &46.9361 & 1.1921 & 102.26  & 0.1 \\ \hline 
   \multirow{4}{*}{$\theta_8 = 100$} & Girsanov & $10^7$ ($p=0.4197$)  &-120.295 & 7.1232  & 122393 & --- \\
 & PPA & 8627 &-127.668 & 3.2552  & 286.31 & --- \\ 
 & CFD & 20610  &-127.04 & 3.1996 & 1084.35 & 0.01 \\
 & CRP & 7199  &-128.296 & 3.1853 & 251.14  & 0.01 \\ \hline
 
\multirow{4}{*}{$\theta_{12} =0.5 $} & Girsanov & 1477492 &1429.5 & 20.1706  & 17750.3 & --- \\
 & PPA & 402  &1451.45 & 32.7907 & 95.61 & ---\\
& CFD & 856  &1484.81 & 34.4219 & 46.81 & 0.01   \\
 & CRP& 1902 &1424.87 & 16.986  & 64.71  & 0.01 \\ \hline 
 
 \multirow{4}{*}{$\theta_{14} = 0.2$} & Girsanov & $10^7$  ($p=0.0139$) &-0.1025 & 0.2748  & 121611 & ---\\
 &PPA & 707 &0.1421 & 0.0036 & 143.53 &--- \\
 & CFD  & 473002&0.1401 & 0.0029 & 24993.6 & 0.1 \\
 &CRP & 451383  &0.1419 & 0.0036 & 15893.1 & 0.1   \\ \hline
 
\end{tabular}
\end{center}

\begin{center} 
{\bf  Efficiency comparison for confidence level $p=0.99$} 

\begin{tabular}{|c  | c | c | c | c | c |c |}
\hline
$\theta$ &  Method & N & Mean ($\mu_N$) & Std. Dev. ($\sigma_N$)  & CPU time (s) & h \\ \hline  

\multirow{4}{*}{$\theta_5 =10$} &  Girsanov & 1450448  &-238.278 & 4.23555 & 17554.5 & --- \\
 &  PPA & 1613 &-240.477 & 4.663 & 302.38 & --- \\
 & CFD & 1610 &-234.522 & 2.651  & 90.21  & 0.1  \\
 & CRP & 908  &-239.075 & 4.446 & 31.59  & 0.1 \\ \hline 
 
 \multirow{4}{*}{$\theta_6 =50$}  & Girsanov & $10^7$ ($p=0.9589$)  &45.9936 & 0.732054 & 122605  & --- \\ 
 & PPA & 3181 &47.3226 & 0.8806  & 393.63 & --- \\ 
 &  CFD & 2079 &47.4844 & 0.8268   & 128.82  & 0.1 \\
  & CRP & 5014 &47.0243 & 0.9124  & 176.95  & 0.1 \\ \hline 
  
   \multirow{4}{*}{$\theta_8 = 100$} & Girsanov & $10^7$ ($p=0.3972$)  &-119.825 & 7.2083  & 122295 & --- \\ 
 & PPA & 28037 &-129.732 & 1.8377  & 939.13 & --- \\ 
 &  CFD& 39614 &-128.578 & 2.3064  & 2145.17  & 0.01  \\
 & CRP& 12300 &-128.122 & 2.4254  & 428.54 & 0.01 \\ \hline 
 
\multirow{4}{*}{$\theta_{12} =0.5 $}  &Girsanov& 825031  &1460.32 & 27.015  & 10126.4 & --- \\
 &PPA & 1075 &1445.52 & 21.1119  & 261.03 & --- \\
& CFD & 2873  &1439.44 & 18.4201 & 165.05  & 0.01  \\
 & CRP & 3061  &1428.16 & 13.6176 & 105.95 & 0.01\\ \hline 
 
 \multirow{4}{*}{$\theta_{14} = 0.2$}  &Girsanov  &  $10^7$ ($p=0.0205$) &0.171034 & 0.275163& 122064 & --- \\
 &PPA & 877  &0.1424 & 0.0028 & 182.81 & --- \\
 & CFD& 1062478  &0.1396 & 0.0018  & 57455.9 & 0.1 \\
 &CRP  & 1178757 &0.1444 & 0.0022 & 41566.7  & 0.1   \\ \hline 
 
\end{tabular}
\end{center}  

\newpage 
\subsection{Genetic toggle switch}
\begin{center}
{\bf Approximate values of $S_\theta(f,T)$} 

\begin{tabular}{|c |c| c | c | c | }
\hline
 $\theta  $ &  $\alpha_1 = 50$  & $\alpha_2 = 16$ & $\beta= 2.5$ & $\gamma = 1$  \\  \hline
$S_{\theta}(f,T)$ &  $1.19 $   & $-2.107   $ & $ -5.9571$ & $ 54.7495  $   \\ \hline
\end{tabular}
\end{center}

\begin{center} 
{\bf Efficiency comparison for confidence level $p=0.95$} 

\begin{tabular}{|c  | c | c | c | c | c |c |}
\hline
$\theta$ &  Method  & N & Mean ($\mu_N$) & Std. Dev. ($\sigma_N$)  & CPU time (s) & h \\ \hline 

\multirow{4}{*}{$\alpha_1 =50$}
& Girsanov & 490831  &1.1656 & 0.0213 & 170.23 & --- \\
 &  PPA  & 15125 &1.175 & 0.0263 & 15.57 & --- \\ 
 &  CFD & 358066 &1.1679 & 0.0225 & 149.37 &0.1\\
 &CRP & 693353 &1.1847 & 0.0299   & 311.71 &  0.1\\ \hline 
 
 \multirow{4}{*}{$\alpha_2 =16$}
 &Girsanov& 16911  &-2.0689 & 0.0406 & 5.81 & ---  \\
 &PPA& 5578 &-2.1063 & 0.0537  & 5.89 & --- \\   
 & CFD & 370794  &-2.1399 & 0.0438 & 143.56 &0.1  \\
 & CRP & 596643  &-2.0981 & 0.053 & 276.66  &0.1  \\ \hline 
  
   \multirow{4}{*}{$\beta =2.5$}
 &Girsanov & 174222  &-6.0027 & 0.145 & 62.99 & ---  \\
& PPA & 8898 &-5.9287 & 0.149  & 8.85 & ---  \\ 
 & CFD & 677286  &-5.7496 & 0.0547 & 313.34 &0.1  \\
 &CRP & 3109153  &-5.7104 & 0.0311 & 1466.56  & 0.1  \\ \hline 
 
\multirow{4}{*}{$\gamma = 1 $} 
&  Girsanov& 32762  &55.1442 & 1.3392  & 11.78 & --- \\
 & PPA & 1553 &54.8859 & 1.3883  & 2.84 & --- \\  
& CFD & 70730 &54.8282 & 1.3945  & 33.57  & 0.01  \\
 & CRP & 167092 &54.6585 & 1.3934  & 86.87 & 0.01  \\ \hline

\end{tabular}
\end{center}

\begin{center} 
{\bf Efficiency comparison for confidence level $p=0.99$} 

\begin{tabular}{|c  | c | c | c | c | c |c |}
\hline
$\theta$ &  Method  & N & Mean ($\mu_N$) & Std. Dev. ($\sigma_N$)  & CPU time (s) & h \\ \hline 

\multirow{4}{*}{$\alpha_1 =50$}
& Girsanov& 814460 &1.1691 & 0.0166  & 296.9 & --- \\
 &  PPA & 22757  &1.1938 & 0.0228& 23.72 & --- \\  
 & CFD & 401198 &1.1974 & 0.022  & 164.54 & 0.1\\
 & CRP & 1742305 &1.2053 & 0.019  & 834.77  & 0.1\\ \hline 
 
 \multirow{4}{*}{$\alpha_2 =16$}
 &Girsanov & 37611 &-2.1489 & 0.0272  & 13.61 & ---  \\
 &PPA & 10926  &-2.1215 & 0.0385& 11.33 & --- \\  
 & CFD & 431036 &-2.1143 & 0.04 & 178.77  & 0.1   \\
 & CRP & 1839908 &-2.1424 & 0.03 & 860.29 & 0.1  \\ \hline 
  
   \multirow{4}{*}{$\beta =2.5$}
 &Girsanov& 275232   &-5.9431 & 0.1147 & 99.17 & --- \\
&PPA & 18342  &-5.9233 & 0.1108 & 14.93 & --- \\
 & CFD & 2235576 &-5.7292 & 0.03 & 932.54 & 0.1  \\
 &CRP& 3401875   &-5.7285 & 0.0298 & 1650.65 & 0.1\\ \hline  
 
\multirow{4}{*}{$\gamma = 1 $} 
&  Girsanov & 54087  &54.5545 & 1.0448 & 19.51 & ---    \\
 & PPA& 3247  &54.2298 & 0.9452 & 6.34 & --- \\  
& CFD & 356293  &53.4473 & 0.6144 & 178.28 & 0.01  \\
 & CRP & 356293 &54.5786 & 1.0488 & 155.94 & 0.01   \\ \hline

\end{tabular}
\end{center}

\end{document}